\numberwithin{equation}{section}
\newcommand*{\dt}[1]{
 \accentset{\mbox{\large\bfseries .}}{#1}}
\theoremstyle{plain}
\newtheorem{theorem}{Theorem}[section]
\newtheorem{lemma}[theorem]{Lemma}
\newtheorem{proposition}[theorem]{Proposition}
\newtheorem{definition}[theorem]{Definition}
\newcommand{\interleave}{\|{\hspace{-.039cm}|}}
\newcommand{\T}{\top}
\DeclareMathOperator{\oA}{A}
\newcommand{\pA}{\pmb{\oA}}
\DeclareMathOperator{\oB}{B}
\DeclareMathOperator{\oC}{C}
\DeclareMathOperator{\oD}{D}
\DeclareMathOperator{\dd}{d}
\DeclareMathOperator{\oG}{G}
\DeclareMathOperator{\oH}{H}
\newcommand{\pH}{\pmb{\oH}}
\DeclareMathOperator{\oI}{I}
\DeclareMathOperator{\oJ}{J}
\newcommand{\pJ}{\pmb{\oJ}}
\DeclareMathOperator{\oL}{L}
\newcommand{\pL}{\pmb{\oL}}
\DeclareMathOperator{\oM}{M}
\DeclareMathOperator{\oP}{P}
\newcommand{\pP}{\pmb{\oP}}
\newcommand{\mP}{\mathbb{P}}
\DeclareMathOperator{\oS}{S}
\DeclareMathOperator{\rank}{rank}
\begin{document}

\title[Lax-type pairs in the theory of bivariate OP]{Lax-type pairs in the theory of bivariate orthogonal polynomials}

\author[A. 
Branquinho, 
A. 
Foulqui\'e-Moreno,
T. E.
P\'erez,
and M. A. 
Pi\~nar]
{Am\'ilcar Branquinho, Ana Foulqui\'e-Moreno, Teresa E. P\'erez, \\
and Miguel A. Pi\~nar}

\address[A. Branquinho]{CMUC,
Department of Ma\-the\-ma\-tics, University of Coimbra, Apartado 3008, EC Santa Cruz, 3001-501 COIMBRA, Portugal.}
\email{ajplb@mat.uc.pt}

\address[A. Foulqui\'e-Moreno]{CIDMA,
Departamento de Matem\'atica, Universidade de Aveiro, 3810-193 Aveiro, Portugal}
\email{foulquie@ua.pt}

\address[T. E. P\'erez]{Instituto 
de Matem\'aticas IMAG \&
Departamento de Ma\-te\-m\'{a}\-ti\-ca Aplicada, Facultad de Ciencias. Universidad de Granada (Spain)}
\email{tperez@ugr.es}

\address[M. A. Pi\~nar]{Instituto de Matem\'aticas IMAG \&
Departamento de Ma\-te\-m\'{a}\-ti\-ca Aplicada, Facultad de Ciencias. Universidad de Granada (Spain)}
\email{mpinar@ugr.es}

\thanks{AB acknowledges Centro de Matem\'atica da Universidade de Coimbra (CMUC){\textemdash}UIDB/00324/2020 (funded by the Portuguese Government through FCT/MCTES). \\
AFM acknowledge CIDMA{\textemdash}Center for Research \& Development in Mathematics and Applications is supported through the Portuguese Foundation for Science and Technology (FCT{\textemdash}Funda\c c\~ao para a Ci\^encia e a Tecnologia), references UIDB/04106/2020 and UIDP/04106/2020. \\
TEP and MAP thanks Grant FQM-246-UGR20 funded by Consejer\'ia de Universidad, Investigaci\'on e Innovaci\'on de la Junta de Andaluc\'ia and FEDER, Una manera de Hacer Europa; IMAG-Mar\'ia de Maeztu grant CEX2020-001105-M, and Research Group Goya FQM-384}

\date{\today}

\begin{abstract}
Sequences of bivariate orthogonal polynomials written as vector polynomials of increasing size satisfy a couple of three term relations with matrix coefficients. In this work, introducing a time-dependent parameter, we analyse a Lax-type pair system for the coefficients of the three term relations. We also deduce several characterizations relating the Lax-type pair, the shape of the weight, Stieltjes function, moments, a differential equation for the weight, and the bidimensional Toda-type systems. 
\end{abstract}

\subjclass[2020]{Primary: 42C05; 33C50; 35Q53}

\keywords{Two variable orthogonal polynomials, $2$D Toda lattice,
Block Lax pairs}

\maketitle

\section{Introduction}

Several areas of Classical Analysis have profound connections with the theory of orthogonal polynomials, such as moment problems, spectral theory of Jacobi matrices, and random matrices.

One of the more recent connection is with the integrable systems that starts in the 1970's with the work 
by Gardner, Greene, Kruskal and Miura in \cite{ggkm1,ggkm2},
with a method for the exact solution of the initial-value problem for the KdV equation, which is now referred to as the inverse scattering transform~\cite{Ablowitz_Segur}.
In~\cite{Lax}, Lax put the inverse scattering method for solving the~KdV equation into a more general framework, which subsequently paved the way to generalizations of the technique, as a method for solving other partial differential equations.

In fact, he considered, associated with the KdV equation, two time dependent operators $\mathcal L$ and $\mathcal M$, where $\mathcal L$ is the operator of the spectral problem and $\mathcal M$ is the operator governing the associated time evolution of the eigenfunctions
\begin{align*}
\mathcal L v = \lambda v, && \dfrac{\dd}{\dd t} v = \mathcal M v, 
\end{align*}
and hence
\begin{align*}
\dfrac{\dd}{\dd t} \mathcal L = \mathcal M \, \mathcal L - \mathcal L \, \mathcal M ,
 &&
\text{if, and only if,}
 &&
\dfrac{\dd}{\dd t} \lambda = 0 .
\end{align*}
 The relation between Toda lattices and univariate orthogonality has been analysed by several authors, for an introduction and first properties,~\cite{ABM,Is05, Na04, Pe01}, among many others, can be consulted. In the multivariate case, some papers studied the case when several continuous time dependent variables are considered, see \cite{ AM16, AM22}, and in \cite{ADMV16} multidimensional analogues of continuous and discrete-time Toda lattices with two or more space coordinates relating them with multiple orthogonal polynomials are studied.

The~$2$D~Toda lattices with only a time dependent variable can be seen in~\cite{BP17}.
There the authors depart from the knowledge of the weight representation and derive some Toda like lattices and at the end a Lax type representation for them.
In this paper we apply the method of moments, presented in~\cite{Aptekarev_AB} for the univariate Toda lattice, to a  theory so far general of $2$D Toda lattice.

Usually, orthogonal polynomials in two variables are written as vector polynomials of increasing size, and they satisfy three term relations with matrix coefficients (see \cite{DX14}).
We consider orthogonality with respect to a bivariate weight function and introduce a time-dependent parameter such that we work with an evolution weight function. Here we depart from the two tridiagonal block Jacobi operators and present the correspondence between dynamics of the $2$D Toda equations for the block-coefficients of the operators and the Stieltjes function associated with the block Jacobi operators.
This gives a method to solve an inverse problem, 
i.e., we can get an integral representation for the coefficients of the block Jacobi matrices that satisfies a $2$D Toda equations in terms of a weight function completely determined by the~data.

The work is organized in four sections. After this introduction, in Section~\ref{sec:2} we briefly present the theory and basic facts of bivariate orthogonal polynomials that we will need in the sequel.
In Section~\ref{sec:3} we give, in Theorem~\ref{teo:main}, an interpretation of the $2$D Toda lattice in terms of the theory of bivariate orthogonal polynomials. This is in fact the main results of the paper, where we characterize the $2$D Toda lattice in terms of the moments and the Stieltjes function associated with the $2$D Toda lattice.
There we also give a representation for the weight that governs the $2$D Toda lattice, and deduce several characterizations relating the Lax-type pair, the moments, a differential equation for the weight, and the bidimensional Toda-type system.
Finally, in Section~\ref{sec:4} we end the work by establishing a Lax type theorem and proving the isospectrality of the associated block Jacobi matrices.


\section{Preliminary results}\label{sec:2}

For each $n\geqslant 0$, let $\Pi_n$ denote the linear space of bivariate polynomials of total degree not greater than $n$ (cf.~\cite{DX14}).
We consider $\Pi=\bigcup_{n\geqslant 0} \Pi_n$ the linear space of all bivariate polynomials with real coefficients.

We say that $p(x,y)\in\Pi_n$ is \emph{monomial} if there is only one term of higher degree, i.e., there exists
$0\leqslant k\leqslant n$ such~that
\begin{align*}
p(x,y) = a_{n-k,k} x^{n-k}y^{k} + \sum_{m=0}^{n-1}\sum_{i=0}^m a_{m-i,i} x^{m-i}y^{i},
\end{align*}
with $a_{n-k,k}\neq 0$. Moreover, if $a_{n-k,k} = 1$, we 
say that $p(x,y)$ is \emph{monic}.

Let us denote by $\mathcal{M}_{h \times k}(\mathbb{R})$ the linear space of matrices of size $h\times k$ with
real entries and by $\mathcal{M}_{h \times k}(\Pi)$ the linear space of $h\times k$ matrices with polynomials in
two variables entries. The \emph{degree of a matrix polynomial} is defined as the maximum of the degrees of its
polynomial~entries.

Given a matrix $\oM \in \mathcal{M}_{h\times k}$, we denote by $\oM^{\top}$ its transpose. If $h=k$, we will denote $\mathcal{M}_{h\times h}\equiv \mathcal{M}_h$. In particular, $\oI_h$ denotes the identity matrix of size~$h$. 
We say that $\oM \in \mathcal{M}_{h}(\mathbb{R})$ is \emph{non-singular} if $\det \oM\neq 0$.

\subsection{Vector notation}

For each $n\geqslant 0$, let $\mathbb{X}_n$ denote the column vector
\begin{align*}
\mathbb{X}_n=
\begin{bmatrix}
x^n & x^{n-1} y & \cdots & xy^{n-1} & y^n
\end{bmatrix}^{\top},
\end{align*}
of size $(n+1)\times 1$. Then $\big\{\mathbb{X}_n\big\}_{n\geqslant 0}$ is called the \emph{canonical basis} of $\Pi$ and
every polynomial $\oP \in\Pi$ of degree $n$ can be represented~as
\begin{align*}
\oP(x,y)=\sum_{k=0}^n \oC_k^\top \, \mathbb{X}_k,
\end{align*}
where 
$ \oC_k$ is a $(k+1)\times 1$ vector of constants.
We continue, following the notation in~\cite{DX14}, for $n \geqslant 0$, denoting $\oL_{n,1}$ and $\oL_{n,2}$, as the matrices of size $(n + 1) \times
(n + 2)$, such~that
\begin{align*}
\oL_{n,1}\,\mathbb{X}_{n+1}=x\,\mathbb{X}_{n} && \textnormal{and} &&
\oL_{n,2}\,\mathbb{X}_{n+1}=y\,\mathbb{X}_{n},
\end{align*}
where, $\oL_{n,1}$ and $\oL_{n,2}$ represent the so-called shift operators associated with the multiplication of
$\mathbb{X}_n$ by the variables $x$ and $y$, respectively. Therefore,
\begin{align}\label{L_ni}
\oL_{n,1}=
\left[\begin{array}{ccc|c}
1 & {} & {} & 0 \\[-.125cm]
{} & \ddots & {} & \vdots \\
{} & {} & 1 & 0
\end{array}\right], 
 &&
\oL_{n,2}=\left[\begin{array}{c|ccc}
0 & 1 & {} & {}\\[-.125cm]
\vdots & {} & \ddots & {} \\
0 & {} & {} & 1
\end{array}\right].
\end{align}
Observe that, $\rank \oL_{n,i}=n+1$,
$\oL_{n,i} \oL_{n,i}^{\top}
= \oI_{n+1}$, 
for $i=1,2$, and $\oL_{n,1} \oL_{n+1,2} $ $ = \oL_{n,2} \oL_{n+1,1}$. 

Let $\big\{ \oP_{n,m}(x,y): 0\leqslant m \leqslant n, n\geqslant 0\big\}$ denote a basis of $\Pi$ such that, for a 
fixed~$n\geqslant 0$,
$\deg \oP_{n,m}(x,y) = n$, and the set $\big\{\oP_{n,m}(x,y): 0\leqslant m \leqslant n\big\}$ contains $n+1$ 
linearly independent polynomials of total degree exactly~$n$.
We can write the vector of polynomials
\begin{align*}
\mP _n= \begin{bmatrix}
\oP_{n,0}(x,y) & \oP_{n,1}(x,y) & \cdots & \oP_{n,n}(x,y)
\end{bmatrix}^{\top}.
\end{align*}
The sequence of polynomial vectors of increasing size
$\big\{\mP _n\big\}_{n \geqslant 0 }$ is called a \emph{polynomial system}~(PS), and it is a basis of $\Pi$.
Moreover, for each $n\geqslant 0$, the vector $\mP _n$ can be written as
\begin{align*}
\mP _n = \oG_{n}^n\,\mathbb{X}_n + \oG^{n}_{n-1}\,\mathbb{X}_{n-1} + \cdots + 
\oG^{n}_0\,\mathbb{X}_0,
\end{align*}
where $\oG_{n}^n$ is a $n+1$ non-singular matrix of constants, called \emph{the matrix leading coefficient} of~$\mP _n$, and $\oG^n_m$ are $(n+1)\times (m+1)$ constant matrices.
When $\oG_{n}^n$ is the identity matrix, i.e., $\oG_{n}^n = \oI_{n+1}$, for every $n\geqslant 0$, then we say that 
$\big\{\mP _n\big\}_{n\geqslant 0}$ is a \emph{monic PS}. Observe that the system of polynomials given by 
$\widehat{\mP }_n = (\oG_n^n)^{-1}\,\mP _n$, for $n \geqslant 0 $, is a monic polynomial system, in the sense~that
\begin{align*}
\widehat{\mP }_n = \mathbb{X}_n + \widehat{\oG}^{n}_{n-1}\,\mathbb{X}_{n-1} + 
\cdots + \widehat{\oG}^{n}_0\,\mathbb{X}_0,
\end{align*}
where $ \widehat{\oG}^{n}_{m} = (\oG_n^n)^{-1}\,\oG^n_m$, for $0\leqslant m\leqslant n-1$.

\subsection{Orthogonal polynomial systems (OPS)}

Let $\dd \mu(x,y)$ be a normalized measure defined on a region $ \Omega \subset\mathbb{R}^2$ and we assume that it is positive definite, in the way that
\begin{align*}
\int_{ \Omega } p^2(x,y) \, \dd \mu(x,y) > 0, &&
p\in\Pi, && p\not\equiv 0 .
\end{align*}
Moreover, we suppose that the moments 
\begin{align}\label{moment2D}
\omega_{h,k} = \int_{ \Omega } x^h\,y^k \, \dd \mu(x,y) < +\infty, &&
h, k \geqslant 0,
\end{align}
exist, and 
\begin{align*}
\omega_{0,0} = \int_{ \Omega } \dd \mu(x,y) =1.
\end{align*}
As usual, we define the inner product
\begin{align*}
\langle p, q\rangle = \int_{ \Omega } p(x,y)\,q(x,y)\, \dd \mu(x,y), && p, q \in \Pi,
&& \text{with} && \langle 1,1\rangle = 1 .
\end{align*}
We will see how the inner product acts over polynomial matrices. Let
$\oA
= \begin{bmatrix} a_{i,j}(x,y) \end{bmatrix}_{i,j=1}^{h,k}$ 
and $\oB = \begin{bmatrix} b_{i,j}(x,y) \end{bmatrix}_{i,j=1}^{k,l} $ 
be two polynomial matrices, i.e., $a_{i,j}(x,y), b_{i,j}(x,y) \in\Pi$.
The action of the above inner product over polynomial matrices is
defined as the $h \times l$ matrix (cf.~\cite{DX14}),
\begin{align*}
\langle \oA, \oB\rangle = \int_{ \Omega } \oA(x,y)\, \oB(x,y)\, \dd \mu(x,y) =
\begin{bmatrix}
\displaystyle
\int_{ \Omega } c_{i,j}(x,y) \dd\mu(x,y)
\end{bmatrix}_{i,j=1}^{h,l},
\end{align*}
where
$\displaystyle \oC = \oA \cdot \oB = \begin{bmatrix} c_{i,j}(x,y) \end{bmatrix}_{i,j=1}^{h,l}$.
We say that $\big\{\mP _n\big\}_{n\geqslant 0}$ is an
\emph{orthogonal polynomial system (OPS) with respect to $\langle\cdot, \cdot\rangle$}~if
\begin{align}\label{matrixHn}
\langle \mP _n, \mP ^{\top}_m \rangle
=
\begin{cases}
\mathtt{0}_{(n+1) \times (m+1)}, & n\ne m, \\
\oH_n, & n=m,
\end{cases}
\end{align}
where $\oH_n$ is a symmetric and positive-definite matrix of size $n+1$, and $\mathtt{0}_{(n+1) \times (m+1)}$, or $\mathtt{0}$ for short, is the zero matrix of adequate size. When
$\oH_n$ is diagonal, 
$n \in \mathbb N$, we say
that $\big\{\mP _n\big\}_{n\geqslant 0}$ is a \emph{mutually orthogonal polynomial system}. 
Moreover, there exists an \emph{orthonormal PS} satisfying~\eqref{matrixHn} with $\oH_n= \oI_{n+1}$. In addition, there exists a unique monic orthogonal polynomial system associated to $\dd \mu(x,y)$.

 \subsection{Three term relations}

Orthogonal polynomials in two variables satisfy, in each variable, a three term relation. These three
term relations are written in a vector form and have matrix coefficients.

\begin{proposition}[cf.~\cite{DX14}]
Let $\big\{\mP _n\big\}_{n\geqslant 0}$ be a monic OPS associated to an inner product $\langle \cdot, \cdot\rangle$. 
For $n\geqslant 0$, there exist constant matrices $\oD_{n,i}$ and~$\oC_{n,i}$ of sizes respectively given~by $(n+1)\times(n+1)$ and $(n+1)\times n$,\,$i=1,2$, such~that
\begin{align}\label{monic_3tr}
\begin{cases}
 x \, \mP _n = \oL_{n,1} \, \mP _{n+1} + \oD_{n,1}\,\mP _n + \oC_{n,1}\,\mP_{n-1}, \\
 y\,\mP _n =\oL_{n,2} \, \mP _{n+1} + \oD_{n,2}\,\mP _n + \oC_{n,2}\,\mP_{n-1},
\end{cases}
\end{align}
where $\mP _{-1}=0$, $\oC_{-1,i}=0$, and
\begin{align*}
 \oD_{n,1}\, \oH_{n} = \langle x\,\mP _n,\,\mP _{n}^{\top}\rangle, &&
 \oD_{n,2}\, \oH_{n} = \langle y\,\mP _n,\,\mP _{n}^{\top}\rangle, \\
 \oC_{n,1}\, \oH_{n-1} = \oH_n\, \oL_{n-1,1}^{\top}, && \oC_{n,2}\, \oH_{n-1} = \oH_n\, \oL_{n-1,2}^{\top}.
\end{align*}
Moreover, the rank~conditions
\begin{align*}
\rank \oC_{n,i} = n, \quad i=1, 2, &&
\rank \begin{bmatrix} \oC_{n,1} & \oC_{n,2} \end{bmatrix} = n+1,
\end{align*}
hold.
\end{proposition}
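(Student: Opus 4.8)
The statement is the standard three-term-relation theorem for monic bivariate OPS, so the plan is to follow the now-classical argument (Dunkl–Xu), adapted to the monic normalization. The starting point is that $\{\mP_n\}_{n\ge 0}$ is a basis of $\Pi$, hence each of the degree-$(n+1)$ vector polynomials $x\,\mP_n$ and $y\,\mP_n$ can be expanded uniquely in this basis: $x\,\mP_n = \sum_{k=0}^{n+1} \oA_{n,k}\,\mP_k$ for suitable constant matrices $\oA_{n,k}$ (and similarly for $y$). First I would show that $\oA_{n,k}=\mathtt{0}$ for $k\le n-2$: taking the inner product on the right with $\mP_k^\top$ and using orthogonality~\eqref{matrixHn}, one gets $\oA_{n,k}\oH_k = \langle x\,\mP_n,\mP_k^\top\rangle = \langle \mP_n,(x\,\mP_k)^\top\rangle$, and since $x\,\mP_k$ has degree $k+1\le n-1<n$ it is a combination of $\mP_0,\dots,\mP_{n-1}$, so the last inner product vanishes; as $\oH_k$ is positive definite it is invertible, forcing $\oA_{n,k}=\mathtt{0}$. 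This leaves only the terms $k=n+1,n,n-1$, which I rename $\oL_{n,1}' $, $\oD_{n,1}$, $\oC_{n,1}$ respectively (and $\oD_{n,2},\oC_{n,2}$ in the $y$-relation). Matching the highest-degree term $\mathbb X_{n+1}$ on both sides of $x\,\mP_n = \oL_{n,1}'\,\mP_{n+1}+\cdots$, using that $\mP_n=\mathbb X_n+(\text{lower})$ is monic and $x\,\mathbb X_n = \oL_{n,1}\,\mathbb X_{n+1}$, identifies the leading coefficient $\oL_{n,1}'=\oL_{n,1}$; this gives~\eqref{monic_3tr}. The convention $\mP_{-1}=0$, $\oC_{-1,i}=0$ handles $n=0$.

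Next I would read off the coefficient formulas. Pairing~\eqref{monic_3tr} on the right with $\mP_n^\top$ and using~\eqref{matrixHn} gives $\oD_{n,i}\oH_n = \langle (\,\cdot\,)\mP_n,\mP_n^\top\rangle$ with $\cdot=x$ or $y$. Pairing with $\mP_{n-1}^\top$ gives $\oC_{n,i}\oH_{n-1} = \langle x\,\mP_n,\mP_{n-1}^\top\rangle = \langle \mP_n,(x\,\mP_{n-1})^\top\rangle$; expanding $x\,\mP_{n-1} = \oL_{n-1,1}\,\mP_n + (\text{lower degree})$ via the relation at level $n-1$ and using orthogonality again, only the $\oL_{n-1,1}\,\mP_n$ term survives, giving $\oC_{n,1}\oH_{n-1} = \langle \mP_n,\mP_n^\top\rangle\oL_{n-1,1}^\top = \oH_n\oL_{n-1,1}^\top$, and symmetrically for $i=2$. (Here I use that $\oH_n$ is symmetric.)

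Finally, the rank conditions. Since $\oH_{n-1}$ is non-singular and $\oL_{n-1,i}$ has full rank $n$ (stated in the excerpt), the identity $\oC_{n,i}\oH_{n-1} = \oH_n\oL_{n-1,i}^\top$ forces $\rank\oC_{n,i} = \rank\oL_{n-1,i}^\top = n$. For the joint condition $\rank[\oC_{n,1}\ \oC_{n,2}]=n+1$, I would write $[\oC_{n,1}\ \oC_{n,2}]\begin{bmatrix}\oH_{n-1}&\\&\oH_{n-1}\end{bmatrix} = \oH_n\,[\oL_{n-1,1}^\top\ \oL_{n-1,2}^\top]$, so it suffices to show $\rank[\oL_{n-1,1}^\top\ \oL_{n-1,2}^\top]=n+1$, equivalently that the stacked matrix $\big[\begin{smallmatrix}\oL_{n-1,1}\\ \oL_{n-1,2}\end{smallmatrix}\big]$ of size $2n\times(n+1)$ has rank $n+1$ (full column rank); this follows from the explicit block form~\eqref{L_ni}, since a vector in the kernel would have to be annihilated by both $\oL_{n-1,1}$ (which reads off the first $n$ entries) and $\oL_{n-1,2}$ (which reads off the last $n$ entries), hence be zero. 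The main obstacle, such as it is, is bookkeeping: keeping the matrix sizes and the left/right placement of the (non-symmetric-in-general) coefficient matrices straight, and being careful that $\langle\cdot,\cdot\rangle$ on matrix polynomials is not symmetric, so transposition must be tracked explicitly — but there is no conceptual difficulty beyond the univariate case.
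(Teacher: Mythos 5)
The paper itself gives no proof of this proposition---it is quoted as a known preliminary from~\cite{DX14}---and your argument is exactly the standard one: unique expansion of $x\,\mP_n$ and $y\,\mP_n$ in the orthogonal basis, orthogonality plus a degree count to kill the coefficients of $\mP_k$ for $k\leqslant n-2$, monicity to identify the leading block with $\oL_{n,i}$, pairing with $\mP_n^\top$ and $\mP_{n-1}^\top$ for the coefficient formulas, and the identity $\oC_{n,i}\,\oH_{n-1}=\oH_n\,\oL_{n-1,i}^\top$ together with the explicit form~\eqref{L_ni} of the shift matrices for the rank conditions. The proof is correct and complete, and matches the cited source's approach.
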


On the other hand, we can add the relations in~\eqref{monic_3tr}, to get
\begin{align} \label{eq:soma}
(x+y)\,\mP _n = \oL_{n}\,\mP _{n+1} + \oD_{n}\, \mP _n + \oC_{n}\,\mP _{n-1},
\end{align}
where 
\begin{align*}
\oL_n & = \oL_{n,1} + \oL_{n,2} \,\in\, \mathcal{M}_{(n+1)\times(n+2)}(\mathbb{R}),\\
\oD_n & = \oD_{n,1} + \oD_{n,2} \,\in\, \mathcal{M}_{(n+1)\times(n+1)}(\mathbb{R}),\\
\oC_n & = \oC_{n,1} + \oC_{n,2}\,\in\, \mathcal{M}_{(n+1)\times n}(\mathbb{R}).
\end{align*}
Here we pointed out that, taking $n=0$ in~\eqref{monic_3tr}, and multiplying 
by~$\mP _0=1$, we get
\begin{align*}
\langle x\mP _{0}, \mP _{0}\rangle &= \oL_{0,1}\langle\mP _{1},\mP _{0}\rangle 
+ \oD_{0,1}\langle\mP _0,\mP _{0}\rangle,\\
\langle y\mP _{0}, \mP _{0}\rangle &= \oL_{0,2}\langle\mP _{1},\mP _{0}\rangle 
+ \oD_{0,2}\langle\mP _0,\mP _{0}\rangle,
\end{align*}
and then 
\begin{align*}
\omega_{1,0} = \oD_{0,1}, &&
\omega_{0,1} = \oD_{0,2}.
\end{align*}
since $\langle\mP _0,\mP _{0}\rangle = \langle 1,1\rangle= \omega_{0,0}=1$.

For the orthonormal polynomials $\big\{\widetilde{\mP}_n\big\}_{n \geqslant 0 }$, such that $\oH_n = \oI_{n+1}$, the three term relations~\eqref{monic_3tr} take a simpler~form, as we can see in the next~result.

\begin{proposition}[\cite{DX14}]
For $n\geqslant 0$, there exist real matrices $\oA_{n,i}$,~$\oB_{n,i}$ of respective sizes $(n+1)\times (n+2)$ and 
$(n+1)\times (n+1)$ such~that 
\begin{align}\label{normal_3tr}
\begin{aligned}
& x\,\widetilde{\mP} _n = \oA_{n,1}\,\widetilde{\mP} _{n+1} 
+ \oB_{n,1}\,\widetilde{\mP} _n + \oA^\T_{n-1,1}\,\widetilde{\mP} _{n-1},\\
& y\,\widetilde{\mP} _n = \oA_{n,2}\,\widetilde{\mP} _{n+1} 
+ \oB_{n,2}\,\widetilde{\mP} _n + \oA^\T_{n-1,2}\,\widetilde{\mP} _{n-1},
\end{aligned}
\end{align}
where we define $\widetilde{\mP}_{-1} =0$ and $\oA_{-1,i}=0$. Moreover, each $\oB_{n,i}$ is symmetric.
\end{proposition}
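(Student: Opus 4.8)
The plan is to obtain the relations in~\eqref{normal_3tr} directly from the Fourier expansion of $x\,\widetilde{\mP}_n$ (and, symmetrically, $y\,\widetilde{\mP}_n$) in the orthonormal basis $\big\{\widetilde{\mP}_m\big\}_{m\geqslant 0}$ of $\Pi$, mimicking the monic case, and to read off the symmetry of the diagonal blocks straight from the definition of the matrix inner product. Two elementary facts will be used repeatedly: first, the definition of $\langle\cdot,\cdot\rangle$ on matrix polynomials gives $\langle x\,\oA,\oB\rangle=\langle \oA,x\,\oB\rangle$ and $\langle\oA,\oB\rangle^{\T}=\langle\oB^{\T},\oA^{\T}\rangle$; second, by~\eqref{matrixHn} with $\oH_m=\oI_{m+1}$, any polynomial vector of total degree strictly less than $n$ is $\langle\cdot,\cdot\rangle$-orthogonal to $\widetilde{\mP}_n$.

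Since every entry of $x\,\widetilde{\mP}_n$ has total degree at most $n+1$ and $\big\{\widetilde{\mP}_m\big\}_{m\geqslant 0}$ is a basis of $\Pi$, there are unique constant matrices $\oM_m$ of size $(n+1)\times(m+1)$ with $x\,\widetilde{\mP}_n=\sum_{m=0}^{n+1}\oM_m\,\widetilde{\mP}_m$. Multiplying this identity on the right by $\widetilde{\mP}_k^{\T}$, integrating against $\dd\mu$, and using orthonormality, one gets $\oM_k=\langle x\,\widetilde{\mP}_n,\widetilde{\mP}_k^{\T}\rangle$ for $0\leqslant k\leqslant n+1$. For $k\leqslant n-2$ we have $\oM_k=\langle\widetilde{\mP}_n,x\,\widetilde{\mP}_k^{\T}\rangle$, and since the entries of $x\,\widetilde{\mP}_k^{\T}$ have degree at most $k+1\leqslant n-1$, this vanishes. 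Hence only $m=n-1,n,n+1$ survive, and setting $\oA_{n,1}:=\oM_{n+1}$, of size $(n+1)\times(n+2)$, and $\oB_{n,1}:=\oM_n=\langle x\,\widetilde{\mP}_n,\widetilde{\mP}_n^{\T}\rangle$, of size $(n+1)\times(n+1)$, yields the first line of~\eqref{normal_3tr} with coefficient $\oM_{n-1}=\langle x\,\widetilde{\mP}_n,\widetilde{\mP}_{n-1}^{\T}\rangle$ multiplying $\widetilde{\mP}_{n-1}$; the second line follows verbatim with $y$ in place of $x$, and the conventions $\widetilde{\mP}_{-1}=0$, $\oA_{-1,i}=0$ handle $n=0$.

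It remains to identify $\oM_{n-1}$ and to verify symmetry. Applying the formula for $\oM_m$ one level down, $\oA_{n-1,1}=\langle x\,\widetilde{\mP}_{n-1},\widetilde{\mP}_{n}^{\T}\rangle$, so $\oA_{n-1,1}^{\T}=\langle\widetilde{\mP}_n,x\,\widetilde{\mP}_{n-1}^{\T}\rangle=\langle x\,\widetilde{\mP}_n,\widetilde{\mP}_{n-1}^{\T}\rangle=\oM_{n-1}$, which gives the stated lower coefficient $\oA^{\T}_{n-1,1}$, and likewise for the $y$-relation. Finally, $\oB_{n,1}^{\T}=\langle x\,\widetilde{\mP}_n,\widetilde{\mP}_n^{\T}\rangle^{\T}=\langle\widetilde{\mP}_n,x\,\widetilde{\mP}_n^{\T}\rangle=\langle x\,\widetilde{\mP}_n,\widetilde{\mP}_n^{\T}\rangle=\oB_{n,1}$, and similarly $\oB_{n,2}^{\T}=\oB_{n,2}$, so each $\oB_{n,i}$ is symmetric.

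There is no serious obstacle here; this is the classical derivation of the three term relations for orthonormal bivariate polynomials (cf.~\cite{DX14}). The only point demanding a little care is the bookkeeping that the coefficient of $\widetilde{\mP}_{n-1}$ in the relation at level $n$ is the transpose of the coefficient $\oA_{n-1,1}$ of $\widetilde{\mP}_n$ in the relation at level $n-1$, which is precisely the scalar-commutation and transpose identity used above. Alternatively, one could start from the monic relations~\eqref{monic_3tr}, write $\widetilde{\mP}_n=\oS_n\,\mP_n$ for an invertible $\oS_n$ with $\oS_n\,\oH_n\,\oS_n^{\T}=\oI_{n+1}$, and conjugate; but the direct expansion is shorter and makes the symmetry of $\oB_{n,i}$ transparent.
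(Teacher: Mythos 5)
Your argument is correct: the expansion $x\,\widetilde{\mP}_n=\sum_{m=0}^{n+1}\oM_m\,\widetilde{\mP}_m$ with $\oM_k=\langle x\,\widetilde{\mP}_n,\widetilde{\mP}_k^{\T}\rangle$, the vanishing of $\oM_k$ for $k\leqslant n-2$ by degree reasons, the identification $\oM_{n-1}=\oA_{n-1,1}^{\T}$ via $\langle \oA,\oB\rangle^{\T}=\langle \oB^{\T},\oA^{\T}\rangle$ and scalar commutation, and the symmetry of $\oB_{n,1}=\langle x\,\widetilde{\mP}_n,\widetilde{\mP}_n^{\T}\rangle$ are all sound, and the $y$-relation is indeed verbatim. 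The paper does not prove this proposition at all; it cites \cite{DX14} and, right after the statement, records the formulas $\oA_{n,i}=\oH_n^{-1/2}\,\oL_{n,i}\,\oH_{n+1}^{1/2}$ and $\oB_{n,i}=\oH_n^{-1/2}\,\oD_{n,i}\,\oH_n^{1/2}$, which point to the alternative route you sketch in your last sentence: substitute $\widetilde{\mP}_n=\oH_n^{-1/2}\mP_n$ into the monic relations~\eqref{monic_3tr}, use $\oC_{n,i}\,\oH_{n-1}=\oH_n\,\oL_{n-1,i}^{\T}$ to see that the lower coefficient becomes $\oH_n^{1/2}\,\oL_{n-1,i}^{\T}\,\oH_{n-1}^{-1/2}=\oA_{n-1,i}^{\T}$, and note that $\oB_{n,i}=\oH_n^{-1/2}\big(\oD_{n,i}\,\oH_n\big)\oH_n^{-1/2}$ is symmetric because $\oD_{n,i}\,\oH_n=\langle x\,\mP_n,\mP_n^{\T}\rangle$ (resp.\ with $y$) is. Your direct Fourier expansion buys a self-contained derivation that does not presuppose the monic relations and makes the symmetry of $\oB_{n,i}$ transparent; the conjugation route buys the explicit correspondence between the orthonormal and monic coefficients, which the paper relies on later when passing between the block Jacobi matrices $\pJ_i$ and $\pL_i$.
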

Following~\cite[p. 64]{DX14}, given an OPS $\big\{\mP_n\big\}_{n\geqslant 0}$ we can construct an orthonormal PS in the form
\begin{align*}
\widetilde{\mP} _n = \oH_n^{-1/2} \, \mP_n,
\end{align*}
where $\oH_n = \langle \mP _n, \mP ^{\top}_n \rangle$, and $\oH_n^{-1/2} = (\oH_n^{1/2})^{-1}$ is the inverse of the 
positive \textit{square root} of $\oH_n$, the unique positive-definite matrix satisfying $\oH_n^{1/2}\,\oH_n^{1/2} = \oH_n$ (cf.~\cite{HJ85}).
 In such a case, we get
\begin{align*}
\oA_{n,i} = \oH_n^{-1/2}\, \oL_{n,i}\, \oH_{n+1}^{1/2}, && 
\oB_{n,i} = \oH_n^{-1/2}\, \oD_{n,i}\, \oH_n^{1/2}.
\end{align*}

\subsection{Block Jacobi matrices}

As in the one variable case, we can consider in the two variables 
one, two infinite tridiagonal block matrices~$\pJ_i$, $i=1,2$, defined as~follows 
\begin{align}\label{Jacobi-block}
\pJ_i = 
\begin{bmatrix}
\oD_{0,i} & \oL_{0,i} & & & 
 \\
\oC_{1,i} & \oD_{1,i} & \oL_{1,i} & & 
 \\
 & \oC_{2,i} & \oD_{2,i} & 
 \oL_{2,i} & 
 \\
 & & \ddots & \ddots & \ddots 
\end{bmatrix},
\end{align}
where the matrices in the main diagonal $\oD_{n,i}$ are square matrices of increasing size $n+1$, the matrices in the subdiagonal, $\oC_{n,i}$, are $(n+1)\times n$ matrices, and in the superdiagonal, the matrices~$\oL_{n,i}$ are $n\times (n+1)$ matrices, and the other elements are zero matrices of appropriate~size.

Defining the infinite column vector 
\begin{align*}
\pP = \begin{bmatrix}
\mP ^{\top}_0 & \mP ^{\top}_{1} & \cdots
& \mP ^{\top}_{n} & \cdots
\end{bmatrix}^{\top} ,
\end{align*}
the three term relations~\eqref{monic_3tr} can be written as
\begin{align*}
x \, \pP = \pJ_1\,\pP, &&
 y\,\pP = \pJ_2\,\pP.
\end{align*}
Directly from the three term relation we have a formal result above the commutativity of the matrix entries of 
both block matrices.

\begin{lemma}
For $n\geqslant 0$, we get
\begin{align}\label{C_D_Symmetry}
\begin{aligned} 
\phantom{ola}&\oC_{n,1} \oC_{n-1,2} = \oC_{n,2} \oC_{n-1,1},
 \\
\phantom{ola}&\oD_{n,1}\,\oC_{n,2} + \oC_{n,1}\,\oD_{n-1,2}
= \oD_{n,2}\,\oC_{n,1} + \oC_{n,2}\,\oD_{n-1,1}, 
 \\
\phantom{ola}&\oL_{n,1} \oC_{n+1,2} + \oD_{n,1} \oD_{n,2} + \oC_{n,1} \oL_{n-1,2} 
 \\
 & \phantom{olaolaolaola} = \oL_{n,2} \oC_{n+1,1}+ \oD_{n,2} \oD_{n,1} + \oC_{n,2} \oL_{n-1,1},\\
\phantom{ola}& \oL_{n,1} \oD_{n+1,2} + \oD_{n,1} \oL_{n,2} = \oL_{n,2} \oD_{n+1,1} + \oD_{n,2} \oL_{n,1},
\end{aligned}
\end{align}
where we assume $ \oC_{0,1} = \oC_{0,2} = 0$.
\end{lemma}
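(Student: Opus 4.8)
The plan is to exploit the commutativity of multiplication by $x$ and $y$: since $x\,(y\,\mP_n) = y\,(x\,\mP_n)$, expanding both sides with the three term relations~\eqref{monic_3tr} and comparing coefficients in the basis $\big\{\mP_k\big\}_{k\geqslant 0}$ will reproduce precisely the four identities (together with one that is already known).

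First I would apply the second relation in~\eqref{monic_3tr} to each summand of $x\,\mP_n = \oL_{n,1}\,\mP_{n+1} + \oD_{n,1}\,\mP_n + \oC_{n,1}\,\mP_{n-1}$, invoking~\eqref{monic_3tr} at the three consecutive levels $n-1$, $n$, $n+1$. Collecting terms by polynomial vector, this yields
\begin{align*}
y\,x\,\mP_n
&= \oL_{n,1}\oL_{n+1,2}\,\mP_{n+2}
+ \big(\oL_{n,1}\oD_{n+1,2} + \oD_{n,1}\oL_{n,2}\big)\,\mP_{n+1} \\
&\quad + \big(\oL_{n,1}\oC_{n+1,2} + \oD_{n,1}\oD_{n,2} + \oC_{n,1}\oL_{n-1,2}\big)\,\mP_n \\
&\quad + \big(\oD_{n,1}\oC_{n,2} + \oC_{n,1}\oD_{n-1,2}\big)\,\mP_{n-1}
+ \oC_{n,1}\oC_{n-1,2}\,\mP_{n-2}.
\end{align*}
Interchanging the subscripts $1$ and $2$ gives the corresponding expansion for $x\,y\,\mP_n$.

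Since $\big\{\mP_k\big\}_{k\geqslant 0}$ is a basis of $\Pi$, the vectors $\mP_{n-2},\dots,\mP_{n+2}$ are linearly independent, so the two expansions of $x\,y\,\mP_n = y\,x\,\mP_n$ must coincide term by term. The coefficient of $\mP_{n+2}$ recovers the already established identity $\oL_{n,1}\oL_{n+1,2} = \oL_{n,2}\oL_{n+1,1}$, while the coefficients of $\mP_{n+1}$, $\mP_n$, $\mP_{n-1}$ and $\mP_{n-2}$ give, respectively, the fourth, third, second and first identities of~\eqref{C_D_Symmetry}. For small $n$ one checks that the conventions $\mP_{-1}=0$, $\oC_{-1,i}=0$ and $\oC_{0,i}=0$ make the spurious lower-level terms disappear, so the identities hold for every $n\geqslant 0$.

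The argument is entirely computational; the only point that needs care is the bookkeeping of matrix sizes when the three term relations are inserted at the shifted levels $n\pm 1$, so that every matrix product in the expansion is well defined and the comparison of coefficients is legitimate. I do not expect any genuine obstacle beyond this.
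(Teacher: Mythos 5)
Your proof is correct and follows essentially the same route as the paper: both expand $y\,x\,\mP_n$ and $x\,y\,\mP_n$ via the three term relations at levels $n-1,n,n+1$ and match the matrix coefficients of $\mP_{n-2},\dots,\mP_{n+2}$. The only (immaterial) difference is that the paper extracts those coefficients by taking inner products $\langle \cdot,\mP_j^\top\rangle$ and using orthogonality, whereas you invoke the linear independence of the basis directly.
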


\begin{proof}
Since $y \, x\,\mP _n = x\,y \,\mP _n$, from~\eqref{monic_3tr} we get
\begin{multline*}
 \oL_{n,1} \big( \oL_{n+1,2} \, \mP _{n+2} + \oD_{n+1,2}\,\mP _{n+1} 
 + \oC_{n+1,2}\,\mP_{n} ) \\
 + \oD_{n,1} \big( \oL_{n,2} \, \mP _{n+1} + \oD_{n,2}\,\mP _n + \oC_{n,2}\,\mP_{n-1} \big) \\
+ \oC_{n,1} \big( \oL_{n-1,2} \, \mP _{n} + \oD_{n-1,2}\,\mP_{n-1} + \oC_{n-1,2}\,\mP_{n-2} \big)
\\
 =\oL_{n,2} \big( \oL_{n+1,1} \, \mP _{n+2} + \oD_{n+1,1}\,\mP_{n+1} + \oC_{n+1,1}\,\mP_{n} \big) \\
 + \oD_{n,2} \big( \oL_{n,1} \, \mP _{n+1} + \oD_{n,1}\,\mP_{n} 
 + \oC_{n,1}\,\mP_{n-1} \big) \\
 + \oC_{n,2} \big( \oL_{n-1,1} \, \mP _{n} + \oD_{n-1,1}\,\mP_{n-1} + \oC_{n-1,1}\,\mP_{n-2} \big).
\end{multline*}
Taking $\langle y\,x \,\mP_n, \mP_{j}^\top \rangle = \langle x\,y\, \mP_n, \mP_{j}^\top \rangle$, successively for $j=n-2$,$n-1$,$n$,
$n+1$,$n+2$ we get the desired result.
\end{proof}

As it is shown in \cite[pp. 83 and seq.]{DX14}, above conditions are not enough to assure the commutativity of $ \pJ_1$ 
and $\pJ_2$ since they are operators. The results about commutativity are showed for the Jacobi matrix for orthonormal
polynomial systems. We consider the Jacobi block matrix based on the three term relations for the orthonormal polynomials~\eqref{normal_3tr}:
\begin{align}\label{Jacobi-block-normal}
\pL_i = 
\begin{bmatrix}
\oB_{0,i} & \oA_{0,i} & & 
 \\
\oA^\T_{0,i} & \oB_{1,i} & \oA_{1,i} & 
 \\
 & \oA^\T_{1,i} & \oB_{2,i} & \oA_{2,i} 
 \\
 & & \ddots & \ddots & \ddots
\end{bmatrix}, && i =1,2 ,
\end{align}
where the matrices $\oA_{n,i}$ and $\oB_{n,i}$ are given in the three term relations, and the other elements are zero matrices of appropriate size. Then the three term relations~\eqref{normal_3tr} can be written as
\begin{align*}
 x \, \widetilde{\pP} = \pL_1\,\widetilde{\pP}, &&
 y \, \widetilde{\pP} = \pL_2\,\widetilde{\pP},
\end{align*}
where $\widetilde{\pP} = \begin{bmatrix}
\widetilde{\mP} ^{\top}_0 & \widetilde{\mP} ^{\top}_{1} & \cdots & \widetilde{\mP} ^{\top}_{n} & \cdots
\end{bmatrix}^{\top}$ is the vector of the orthonormal polynomials.

The matrices $\pL_1$ and $\pL_2$ can be considered as linear operators which act
via matrix multiplication on $\ell^2$, where the domain of the operator consists of all sequences
in $\ell^2$ for which matrix multiplication yields sequences in $\ell^2$. As linear operators,
the matrices $\pL_1$ and $\pL_2$, does not need to be bounded, however, according to \cite[pp. 83 and seq.]{DX14}, a sufficient condition for the boundedness of $\pL_1$ and $\pL_2$ is the compactness of the support $\Omega$ of de measure~$\mu$ having the sequence $\left\{\widetilde{\mP} _{n}\right\}_{n\geqslant 0 }$ as orthonormal polynomials.

Now, defining
\begin{align*}
\pH = 
\begin{bmatrix}
\oH_{0} & & & \\
 & \oH_{1} & & \\
 & & 
 \ddots 
\end{bmatrix}, && \pH^{1/2} = 
\begin{bmatrix}
\oH_{0}^{1/2} & & & \\
 & \oH_{1}^{1/2} & & \\
 & & 
 \ddots 
\end{bmatrix},
\end{align*}
and $\pH^{-1}$, $\pH^{-1/2} 
 $, their respective inverse matrices, we have,
\begin{align*}
\pJ_i = \pH^{1/2}\,\pL_i\,\pH^{-1/2}, && \pL_i = \pH^{-1/2}\,\pJ_i\,\pH^{1/2}, && i=1,2.
\end{align*}
Following Lemma 3.4.4 in \cite[p. 85]{DX14}, 
and supposing that $\pL_i$ are bounded, then they
are self-adjoint and
commute. In this way,
\begin{align*}
\pJ_1 \,\pJ_2 &= \pH^{1/2}\,\pL_1\,\pH^{-1/2}\pH^{1/2}\,\pL_2\,\pH^{-1/2} 
= \pH^{1/2}\,\pL_1\,\pL_2\,\pH^{-1/2} \\
 & = \pH^{1/2}\,\pL_2\,\pL_1\,\pH^{-1/2} 
 = \pH^{1/2}\,\pL_2\,\pH^{-1/2}\pH^{1/2}\,\pL_1\,\pH^{-1/2} = \pJ_2 \,\pJ_1.
\end{align*}
We can summarize this in the following result.

\begin{lemma}
If
$\pL_i = \pH^{-1/2}\,\pJ_i\,\pH^{1/2}$, $i=1,2$, are bounded matrices,~then 
\begin{align*}
\pJ_1 \,\pJ_2 = \pJ_2 \,\pJ_1.
\end{align*}
\end{lemma}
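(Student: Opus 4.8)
The plan is to reduce the claim to the commutativity of the orthonormal block Jacobi operators $\pL_1,\pL_2$ and then transport it back through the similarity $\pJ_i=\pH^{1/2}\,\pL_i\,\pH^{-1/2}$. First I would note that the purely algebraic content — that the multiplication operators by $x$ and $y$ commute — is already encoded in the identities~\eqref{C_D_Symmetry} (equivalently, in applying $xy=yx$ to the three term relations~\eqref{monic_3tr} or~\eqref{normal_3tr}); these say precisely that, \emph{as formal matrices} acting on finitely supported sequences, $\pL_1\pL_2=\pL_2\pL_1$. The subtlety, flagged in the text just after~\eqref{C_D_Symmetry}, is that this is not enough at the level of (possibly unbounded) operators. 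Here is where the hypothesis is used: if $\pL_i=\pH^{-1/2}\,\pJ_i\,\pH^{1/2}$ are bounded, then each $\pL_i$ is a bounded self-adjoint operator on $\ell^2$ (the blocks $\oB_{n,i}$ are symmetric and the sub/super-diagonal blocks are mutual transposes, as in~\eqref{normal_3tr}), and by Lemma~3.4.4 in~\cite[p.~85]{DX14} the two operators $\pL_1$ and $\pL_2$ genuinely commute on all of $\ell^2$.

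It then remains to conjugate back. Since $\pH$ is block diagonal with symmetric positive-definite blocks $\oH_n$, the operators $\pH^{\pm 1/2}$ are well defined and the inner factors $\pH^{-1/2}\pH^{1/2}$ collapse to the identity, so
\[
\pJ_1\,\pJ_2
=\pH^{1/2}\pL_1\pH^{-1/2}\pH^{1/2}\pL_2\pH^{-1/2}
=\pH^{1/2}\pL_1\pL_2\pH^{-1/2}
=\pH^{1/2}\pL_2\pL_1\pH^{-1/2}
=\pJ_2\,\pJ_1 ,
\]
using $\pL_1\pL_2=\pL_2\pL_1$ in the middle equality. This is exactly the three-line computation displayed before the statement, so in the write-up I would keep it that short: cite~\cite[Lemma~3.4.4]{DX14} for the commutativity of $\pL_1,\pL_2$ and then present the conjugation identity above.

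The main (indeed only) obstacle is the functional-analytic point: one must not conflate the formal commutativity of the block entries with commutativity of $\pJ_1,\pJ_2$ as operators. Boundedness of the $\pL_i$ — which, as recalled in the preliminaries, is guaranteed for instance when the support $\Omega$ of $\mu$ is compact — is precisely what upgrades the formal identity to an operator identity, via self-adjointness and the cited lemma; the similarity with $\pH^{1/2}$ is bounded and boundedly invertible in the same regime, so no further care is needed in the final conjugation step.
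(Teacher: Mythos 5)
Your argument is correct and coincides with the paper's own: the text preceding the lemma invokes Lemma~3.4.4 of~\cite{DX14} to get that the bounded self-adjoint operators $\pL_1,\pL_2$ commute, and then performs exactly the conjugation computation $\pJ_1\,\pJ_2=\pH^{1/2}\pL_1\pL_2\pH^{-1/2}=\pH^{1/2}\pL_2\pL_1\pH^{-1/2}=\pJ_2\,\pJ_1$ that you display. Your additional remark distinguishing formal commutativity of the block entries from operator commutativity matches the paper's discussion following~\eqref{C_D_Symmetry}.
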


In general, for the bounded case we have,
\begin{align*}
\pJ_1^h \, \pJ_2^k = \pJ_2^k \, \pJ_1^h, && h, k \geqslant 0,
\end{align*}
and the product $\pJ_1^h \, \pJ_2^k$ preserves the block structure described in~\eqref{Jacobi-block}, but not the 
zero block matrices. In particular, we write 
\begin{align*}
\pJ_1^h \, 
\pJ_2^k = 
\begin{bmatrix}
\oD^{(h,k)}_{0,0} & \oL^{(h,k)}_{0,1} & (*) & \cdots 
 \\
\oC^{(h,k)}_{1,0} & \oD^{(h,k)}_{1,1} & \oL^{(h,k)}_{1,2} & \ddots 
 \\
(*) & \oC^{(h,k)}_{2,1} & \oD^{(h,k)}_{2,2} & \ddots 
 \\
 \vdots & 
 \ddots & \ddots & \ddots
\end{bmatrix},
\end{align*}
where the $(*)$ denotes block matrices of adequate size not necessarily zero. For $h, k, n \geqslant 0 $, the respective 
size of the involved matrices are $(n+1)\times(n+1)$ for $\oD^{(h,k)}_{n,n}$, $(n+2)\times (n+1)$ for 
$\oC^{(h,k)}_{n+1,n}$, and
$(n+1)\times(n+2)$ for $\oL^{(h,k)}_{n,n+1}$.

Observe that
\begin{align*} 
\oD^{(1,0)}_{n,n} = \oD_{n,1}, && 
\oC^{(1,0)}_{n,n-1} = \oC_{n,1}, && 
\oL^{(1,0)}_{n-1,n} = \oL_{n-1,1}, \\ 
\oD^{(0,1)}_{n,n} = \oD_{n,2}, && 
\oC^{(0,1)}_{n,n-1} = \oC_{n,2}, && 
\oL^{(0,1)}_{n-1,n} = \oL_{n-1,2}. 
\end{align*}
Let 
$\ell_0 
= \begin{bmatrix} 
1 & 0 & 0 & \cdots
\end{bmatrix}^{\top}$ 
be the zero column vector except for the first element. Then, it is clear that
\begin{align*}
\oD^{(h,k)}_{0,0} = \ell_0^{\top} 
\pJ_1^h \, 
\pJ_2^k \, \ell_0,
\end{align*}
i.e., we can access to the $(0,0)$ element of the matrix $ 
\pJ_1^h \, 
\pJ_2^k$.

Next result shows that all the moments are determined by the powers of the Jacobi matrices.

\begin{proposition}
For $h, k \geqslant 0 $,
\begin{align}\label{moments}
\omega_{h,k} = \ell_0^{\top} \pJ_1^h \, \pJ_2^k \, \ell_0.
\end{align}
\end{proposition}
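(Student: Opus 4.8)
The plan is to read off the moments from the three term relations written in operator form, using nothing beyond the orthogonality relations~\eqref{matrixHn} and the fact that the entries of the block Jacobi matrices are constants.

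First I would iterate the identities $x\,\pP = \pJ_1\,\pP$ and $y\,\pP = \pJ_2\,\pP$. Since every entry of $\pJ_1$ and $\pJ_2$ is a scalar constant, multiplication by a scalar polynomial commutes with multiplication by these block matrices; hence $x^h\,\pP = \pJ_1^h\,\pP$, $y^k\,\pP = \pJ_2^k\,\pP$, and consequently
\begin{align*}
x^h\,y^k\,\pP = \pJ_1^{h}\,\pJ_2^{k}\,\pP
\end{align*}
as an identity between infinite vectors of polynomials. Observe that this step does \emph{not} require $\pJ_1$ and $\pJ_2$ to commute: we only use associativity together with the fact that each block row of $\pJ_1^{h}\pJ_2^{k}$ has finitely many nonzero blocks, so that each coordinate of $\pJ_1^{h}\pJ_2^{k}\pP$ is a genuine finite linear combination of the coordinates of $\pP$.

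Next I would record the elementary consequence of orthogonality that $\langle \pP, \mP_0^{\top}\rangle = \ell_0$. Indeed, since $\mP_0 = 1$ for a monic OPS, the $n$-th block of $\langle \pP, \mP_0^{\top}\rangle = \langle \pP, 1\rangle$ is $\langle \mP_n, \mP_0^{\top}\rangle$, which by~\eqref{matrixHn} equals the zero matrix for $n\geqslant 1$ and equals $\oH_0 = \langle 1,1\rangle = \omega_{0,0} = 1$ for $n=0$. Finally I would take the matrix inner product of both sides of the displayed identity against the constant polynomial $1$. On the left, the first block of $\langle x^h y^k\,\pP, 1\rangle$ is $\langle x^h y^k\,\mP_0, 1\rangle = \langle x^h y^k, 1\rangle = \omega_{h,k}$, so that $\ell_0^{\top}\langle x^h y^k\,\pP, 1\rangle = \omega_{h,k}$. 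On the right, because $\pJ_1^{h}\pJ_2^{k}$ has constant entries and acts block-bandedly, it may be pulled out of the (entrywise) inner product: $\langle \pJ_1^{h}\pJ_2^{k}\,\pP, 1\rangle = \pJ_1^{h}\pJ_2^{k}\,\langle \pP, 1\rangle = \pJ_1^{h}\pJ_2^{k}\,\ell_0$. Applying $\ell_0^{\top}$ and comparing the two sides yields $\omega_{h,k} = \ell_0^{\top}\pJ_1^{h}\pJ_2^{k}\,\ell_0$, which is~\eqref{moments}.

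The argument is essentially bookkeeping, and the only point deserving a word of care is the legitimacy of moving the infinite block matrix $\pJ_1^{h}\pJ_2^{k}$ through the inner product; this is harmless precisely because in each coordinate the action of $\pJ_1^{h}\pJ_2^{k}$ on $\pP$ is a finite linear combination, and the inner product is defined entrywise by an integral, hence is linear. (Should one prefer to avoid operator language altogether, the same identity~\eqref{moments} can be obtained by a double induction on $h$ and $k$ directly from the three term relations~\eqref{monic_3tr}, with base cases $\omega_{1,0}=\oD_{0,1}$ and $\omega_{0,1}=\oD_{0,2}$ already noted in the excerpt.)
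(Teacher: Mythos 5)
Your proposal is correct and follows essentially the same route as the paper: iterate the operator identities to get $x^h y^k\,\pP = \pJ_1^h\,\pJ_2^k\,\pP$, then integrate the first block row against $\mP_0=1$ and use orthogonality to isolate the $(0,0)$ block. Your extra remarks (that commutativity of $\pJ_1,\pJ_2$ is not needed, and that $\langle \pP,1\rangle=\ell_0$) only make explicit what the paper leaves implicit.
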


\begin{proof}
Observe that, for all $h,k \geqslant 0$, we have
\begin{align*}
x^h y^k \,\pP = 
\pJ_1^h \, 
\pJ_2^k \,\pP,
\end{align*}
and then, the first element in both sides are equal,
\begin{align*}
x^h y^k \,\mP _0 = \oD_{0,0}^{(h,k)}\, \mP _0 + \oL^{(h,k)}_{0,1} \,\mP _1 + (*) \,\mP _2 + \cdots
\end{align*}
Integrating by means of the measure $\dd \mu(x,y)$, using the orthogonality, and the fact that $\mP _0=1$, we arrive~to 
$\displaystyle
\displaystyle \omega_{h,k} = \oD_{0,0}^{(h,k)} \omega_{0,0} = \oD_{0,0}^{(h,k)}$.
\end{proof}

\subsection{Stieltjes function in two variables}

From now on, we suppose that the positive definite measure is defined on a compact domain
$\Omega\subset \mathbb{R}^2$ and 
is given in terms of a weight function in the~form:
\begin{align*}
\dd \mu(x,y) = \omega(x,y) \dd x \dd y.
\end{align*}
Suppose that all the moments in~\eqref{moment2D} exist. 
Following~\cite{AFPP08}, we define the Stieltjes function in two variables as a double formal power series.

\begin{definition} 
The \emph{Stieltjes function} associated with a weight function $\omega(x,y)$ (or, equivalently, the operators $\pJ_1$ and $\pJ_2$)
is defined as 
\begin{align}\label{St}
\oS(z_1,z_2) = \int_{ \Omega } \frac{\omega(x,y)}{(z_1-x)\, (z_2-y)} \dd x \dd y.
\end{align}
\end{definition}

Observe that, substituting the formal series (cf.~\cite[p. 134]{Ca63})
\begin{align*}
\frac{1}{(z_1-x)\, (z_2-y)} = \sum_{h,k \geqslant 0 } \, \frac{x^h\,y^k}{z_1^{h+1}z_2^{k+1}},
\end{align*}
defined in 
$\big\{ (z_1,z_2) \in \mathbb C^2: |z_1| > \interleave \pJ_1 \interleave \mbox{ and } \ |z_2|> \interleave \pJ_2 \interleave \big\}$, where by $\interleave \pJ_i \interleave$ we mean the operator norm of $\pJ_i$, $i =1,2$,
we get
\begin{align*}
\oS(z_1,z_2) = \sum_{h,k \geqslant 0 } \frac{1}{z_1^{h+1}z_2^{k+1}}\int_{ \Omega } x^h\,y^k\,\omega(x,y) \dd x \dd y 
 =
\sum_{h,k \geqslant 0 } \, \frac{\omega_{h,k}}{z_1^{h+1}z_2^{k+1}}.
\end{align*}
We introduce the \emph{first $k$-marginal series} for the Stieltjes function~(\ref{St}),
\begin{align}\label{St-first}
\oS_{1,k}(z) = \sum_{h=0}^{+\infty} \, \frac{\omega_{h,k}}{z^{h+1}}, && k\geqslant 0,
\end{align}
and the \emph{second $h$-marginal series},
\begin{align}\label{St-second}
\oS_{2,h}(z) = \sum_{k=0}^{+\infty} \, \frac{\omega_{h,k}}{z^{k+1}}, && h\geqslant 0.
\end{align}
We observe that the functions
$\oS_{1,k}(z)$, $\oS_{2,h}(z)$,
are Stieltjes function in one variable associated with the moments
\begin{align*}
\int_{ \Omega } x^h \,y^k\,\omega(x,y) \dd x \dd y = \omega_{h,k}, 
 &&
h , k \geqslant 0 .
\end{align*}

\section{The 2D Toda lattices}\label{sec:3}

Now, we will show that the bivariate orthogonal polynomials modelize the $2$D Toda lattice. We suppose that the positive definite measure defined on a region $\Omega \subset \mathbb{R}^2$ is given in terms of a weight function in the~form:
\begin{align*}
\dd \mu(x,y) = \omega (x,y) \dd x \dd y,
\end{align*}
and suppose that all the moments in~\eqref{moment2D} exist. Let $\big\{\mP _n \big\}_{n\geqslant 0}$ be the monic OPS associated to the measure 
$
\dd \mu(x,y) $.

From now on we consider a weight function, $\omega
$, with an
\emph{evolution term}
$t \geqslant 0$, i.e.
$\omega(x,y, t )$
such that $\omega(x,y,0) \equiv \omega(x,y)$. Observe that the explicit shape of the weight function depending on the evolution term $t$ is unknown.

We also suppose that the time dependent weight function $\omega(x,y, t)$ has finite moments,~i.e.
\begin{align*}
\omega_{h,k}(t)
=\int_{ \Omega } x^h\,y^k
\, \omega(x,y, t ) \dd x \dd y < +\infty ,
 &&
h, k \geqslant 0 ,
 &&
t\geqslant 0 ,
\end{align*}
exist,
and is normalized such that $ \omega_{0,0}(t) = 1$.

Observe that the moments depend on the time variable $t$ and verify 
$\omega_{h,k}(0) = \omega_{h,k}$. For $t \geqslant 0$, we define the inner product,
\begin{align*}
\langle p, q\rangle_{t} 
= \int_{ \Omega } p(x,y)\,q(x,y)\,\omega(x,y, t ) \dd x \dd y, && p, q \in \Pi.
\end{align*}
Obviously, $\langle \pmb \cdot, \pmb \cdot \rangle_0 = \langle \pmb \cdot, \pmb \cdot \rangle$.

Let $\big\{\mP _n(t)\big\}_{n\geqslant 0}$ $\equiv$ $\big\{\mP _n(x,y,t)\big\}_{n\geqslant 0}$ be the monic orthogonal 
polynomial system associated to $\omega(x,y, t)$. Clearly, 
these are two variables polynomials (on $x$ and $y$) whose coefficients depend on $t$, 
\begin{align*}
\mP_n(t) = \mathbb{X}_n + \oG^{n}_{n-1}(t)\,\mathbb{X}_{n-1} + \cdots + \oG^{n}_0(t)\,\mathbb{X}_0,
 && n \geqslant 0,
\end{align*}
with $\mP _n (0) =\mP _n$.

As usual, we use the \textit{dot}-notation for the derivative with respect to the time variable~$ t $. Since~$\mP _n(t)$ is a monic polynomial we get 
\begin{align*}
\dt{\mP }_n(t) = \frac{\dd}{\dd t} \mP _n(t) = \dt{\oG}^{n}_{n-1}(t)\,\mathbb{X}_{n-1} + \dt{\oG}^{n}_{n-2}(t)\,\mathbb{X}_{n-2} +
\cdots + \dt{\oG}^{n}_0(t)\,\mathbb{X}_0,
\end{align*}
i.e., $\dt{\mP }_n(t)$ is a $(n+1)$ vector of polynomials of degree less than or equal to~$n-1$.

In addition, the symmetric positive-definite matrix
\begin{align*}
\oH_n(t) =\langle \mP _n(t), \mP ^{\top}_n(t)\rangle_t,
 && n \geqslant 0,
\end{align*}
also depends on $t $, and $\oH_n(0) = \oH_n$. 

The sequence $\{\mathbb {P}_n\}_{n\geqslant0}$ satisfy the three term relations~\eqref{monic_3tr}, but now, the matrix coefficients 
depend on $t $. In this way, for $n\geqslant 0$, there exist matrices
$\oD_{n,i}(t), \oC_{n,i}(t)$, of respective sizes $(n+1)\times (n+1)$ and $(n+1)\times n$, $i=1,2$, such~that
\begin{align}\label{t-3tr}
\begin{aligned}
 x\,\mP _n(t) = \oL_{n,1}\,\mP _{n+1}(t) + \oD_{n,1}(t)\, \mP _n(t) + \oC_{n,1}(t)\,
\mP _{n-1}(t), \\
 y\,\mP _n(t) = \oL_{n,2}\,\mP _{n+1}(t) + \oD_{n,2}(t)\, \mP _n(t) + \oC_{n,2}(t)\,
\mP _{n-1}(t),
\end{aligned}
\end{align}
where $\mP _{-1}(t) =0$ and $\oC _{-1}(t) =0$. Moreover,
\begin{align*}
\oD_{n,1}(t)\, \oH_{n}(t) & = \langle x\,\mP _n(t),\mP ^{\top}_{n}(t)\rangle_t, 
 &
\oD_{n,2}(t)\, \oH_{n}(t) 
 & = \langle y\,\mP _n(t),\mP ^{\top}_{n}(t)\rangle_t,
 \\
\oC_{n,1}(t)\, \oH_{n-1}(t) & = \oH_{n}(t)\, \oL^{\top}_{n-1,1}, &
\oC_{n,2}(t)\, \oH_{n-1}(t) & = \oH_{n}(t)\, \oL^{\top}_{n-1,2},
\end{align*}
and $\oD_{n,i}(0)= \oD_{n,i}$, $\oC_{n,i}(0) = \oC_{n,i}$, $i=1,2$. We must remark that the matrices $\oL_{n,i}$ defined 
in~\eqref{L_ni} are 
independent of $ t $. 

We also define 
\begin{align*}
\oD_n(t) & = \oD_{n,1}(t) + \oD_{n,2}(t), &&
\oC_n(t) = \oC_{n,1}(t) + \oC_{n,2}(t) .
\end{align*}
In this case, the tridiagonal Jacobi matrices also depend on the $t$ variable, and adopt the form
\begin{align}\label{Jacobi-block-t}
\pJ_i(t) = 
\begin{bmatrix}
\oD_{0,i}(t) & \oL_{0,i} & & 
 \\
\oC_{1,i}(t) & \oD_{1,i}(t) & \oL_{1,i} & 
 \\
 & \oC_{2,i}(t) & \oD_{2,i}(t) & \ddots 
 \\
 & & \ddots & \ddots
\end{bmatrix},
\end{align}
where the main diagonal matrices, $\oD_{n,i}(t)$, are of square type and with increasing size with $n$, the matrices in the subdiagonal, $\oC_{n,i}$, are $(n+1)\times n$ matrices, and in the superdiagonal, the matrices~$\oL_{n,i}$ are $n\times (n+1)$ matrices defined in \eqref{L_ni}, and the other elements are zero matrices of appropriate~size.

We define the infinite block matrix
\begin{align}\label{A-matrix}
\pA(t) = 
\begin{bmatrix}
\mathtt{0} & & & 
 \\
\oC_1(t) & \mathtt{0} & & 
 \\
 & \oC_2(t) & \mathtt{0} & 
 \\
 & & 
 \ddots & 
 \ddots 
\end{bmatrix},
\end{align}
where $\mathtt{0}$ are square zero matrices of adequate size. 

In the next theorem, we prove several characterizations for 2D Toda lattices, relating the orthogonality and the shape of the weight function with the evolution term. We will omit the $t$ variable when it can be understood by the context. 

\begin{theorem} \label{teo:main}
Let $w(x,y,t)$ be a normalized weight function defined on a domain $\Omega\subset \mathbb{R}^2$, and let $\big\{\mathbb{P}_n(t)\big\}_{n\geqslant 0}$ be the corresponding monic orthogonal polynomial system. The following statements are equivalent:

\begin{enumerate}[\rm (i)]

\item
The sequences $\big\{\oD_{n,i}(t)\big\}_{n \geqslant 0 }$ and $\big\{\oC_{n,i}(t)\big\}_{n\geqslant1}$, for $i=1,2$, satisfy the~$2$D Toda lattice
\begin{align}
\begin{cases}\label{CD-punto-i}
\dt{\oD}_{n,i}(t) = \oC_n(t)\,\oL_{n-1,i} - \oL_{n,i}\,\oC_{n+1}(t), 
 \\
\dt{\oC}_{n,i}(t) = \oC_{n}(t)\,\oD_{n-1,i}(t) - \oD_{n,i}(t)\, \oC_{n}(t), 
\end{cases} &&
n\geqslant 1 .
\end{align}

\item
The infinite matrices $\pJ_i(t)$ and $\pA(t)$ verify the Lax-Nakamura-type~pairs
\begin{align}\label{Lax}
\dt{ \pJ}_i = \big[\pA, \pJ_i \big] = \pA\, \pJ_i - \pJ_i\,\pA,
\end{align}
for $i=1,2$, where $ 
\pJ_i \equiv \pJ_i(t)$ and $ \pA_i \equiv 
\pA_i(t)$ are defined in~\eqref{Jacobi-block-t} and \eqref{A-matrix}, respectively.

\item
For $h,k\geqslant 0$, we have
\begin{align*}
& \dt{\aoverbrace[L1R]{\pJ_1^h\, \pJ_2^k}} = \pA\, \pJ_1^h\, \pJ_2^k - \pJ_1^h\,
\pJ_2^k\, \pA.
\end{align*}
\item For $h,k\geqslant 0$ the moments satisfy
\begin{align}\label{omega-dot}
\dt{\omega}_{h,k} = -\omega_{h+1,k} - \omega_{h,k+1} 
+ \omega_{h,k} \big( \omega_{1,0} + \omega_{0,1} \big).
\end{align}

\item
The Stieltjes function satisfies
\begin{align}\label{eq:eqdiffStiel}
\dt{\oS}(z_1,z_2, t ) = \big( \oD_0 - z_1 - z_2 \big) \oS(z_1,z_2, t ) + \oS_{1,0}(z_1, t ) + \oS_{2,0}(z_2, t ) .
\end{align}

\item
The weight function is solution of the differential equation
\begin{align}
\dt{\omega} = - \big( x_1 + x_2 - D_0 \big) \, \omega.
 \label{eq:funcional}
\end{align}

\item
The weight function is given by
\begin{align}\label{eq:representacao}
\omega(x,y,t) = \dfrac{\mathrm{e}^{-(x_1+x_2)t}\omega(x,y)}{\int_\Omega \mathrm{e}^{-(x_1+x_2)t}\omega(x,y)\dd x\dd y}.
\end{align}

\item
The MOPS satisfy
\begin{equation}\label{dot_P}
\dt{\mP}_n(t) = \oC_n(t) \,\mP_{n-1}(t),
\end{equation}
for $n\geqslant 1$ and $t\geqslant 0$.
\end{enumerate}
\end{theorem}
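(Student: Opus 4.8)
The plan is to prove the chain of equivalences by establishing a cycle together with a few direct bridges, using the monic OPS machinery from Section~\ref{sec:2}. The backbone will be $(\mathrm{vi})\Leftrightarrow(\mathrm{vii})$, then $(\mathrm{vii})\Rightarrow(\mathrm{iv})$, then $(\mathrm{iv})\Leftrightarrow(\mathrm{v})$, then $(\mathrm{iv})\Rightarrow(\mathrm{viii})$, $(\mathrm{viii})\Rightarrow(\mathrm{i})$, $(\mathrm{i})\Leftrightarrow(\mathrm{ii})$, $(\mathrm{ii})\Leftrightarrow(\mathrm{iii})$, and finally closing the loop $(\mathrm{iii})\Rightarrow(\mathrm{iv})$ or $(\mathrm{ii})\Rightarrow(\mathrm{vi})$ by reading off the $(0,0)$-entry and using $\omega_{1,0}(t)=\oD_{0,1}(t)$, $\omega_{0,1}(t)=\oD_{0,2}(t)$.

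The easiest link is $(\mathrm{vi})\Leftrightarrow(\mathrm{vii})$: differentiating \eqref{eq:representacao} with respect to $t$ and writing $N(t)=\int_\Omega \mathrm{e}^{-(x_1+x_2)t}\omega(x,y)\dd x\dd y$, one gets $\dt\omega=-(x_1+x_2)\omega + \omega\,(-\dt N/N)$, and $-\dt N/N = \int_\Omega(x+y)\omega(x,y,t)\dd x\dd y = \omega_{1,0}(t)+\omega_{0,1}(t) = \oD_0(t)$ by normalization and the identities just after \eqref{eq:soma}; conversely, \eqref{eq:funcional} is a linear ODE in $t$ for each fixed $(x,y)$ whose normalized solution is forced to be \eqref{eq:representacao}. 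For $(\mathrm{vii})\Rightarrow(\mathrm{iv})$ (equivalently $(\mathrm{vi})\Rightarrow(\mathrm{iv})$), differentiate $\omega_{h,k}(t)=\int_\Omega x^h y^k\omega\,\dd x\dd y$ under the integral and substitute \eqref{eq:funcional}; the term $\int x^h y^k(x+y)\omega$ produces $-\omega_{h+1,k}-\omega_{h,k+1}$ and the term $+\oD_0\int x^h y^k\omega$ produces $+\omega_{h,k}(\omega_{1,0}+\omega_{0,1})$, which is exactly \eqref{omega-dot}. The equivalence $(\mathrm{iv})\Leftrightarrow(\mathrm{v})$ is obtained by multiplying \eqref{omega-dot} by $z_1^{-(h+1)}z_2^{-(k+1)}$ and summing over $h,k\geqslant0$, recognizing $\sum \omega_{h+1,k} z_1^{-(h+1)}z_2^{-(k+1)} = z_1\oS - \oS_{2,0}(z_2)$ and similarly for the other shift, and $\sum \omega_{h,k}z_1^{-(h+1)}z_2^{-(k+1)}=\oS$, using $\omega_{1,0}+\omega_{0,1}=\oD_0$; the formal-series manipulation is reversible.

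For $(\mathrm{iv})$ or $(\mathrm{i})\Leftrightarrow(\mathrm{viii})$ and the Lax reformulation, the key computational input is to differentiate the three term relations \eqref{t-3tr} in $t$. Since $\dt{\mP}_n(t)$ is a vector of polynomials of degree $\leqslant n-1$, one can expand it in the basis $\{\mP_0,\dots,\mP_{n-1}\}$ and determine the coefficients by orthogonality: $\langle \dt{\mP}_n,\mP_m^\top\rangle_t$ for $m<n$ is computed by differentiating $\langle\mP_n,\mP_m^\top\rangle_t=\mathtt 0$, which gives $\langle\dt{\mP}_n,\mP_m^\top\rangle_t = -\langle \mP_n,\dt{\mP}_m{}^\top\rangle_t - \int_\Omega \mP_n\mP_m^\top\dt\omega$; using \eqref{eq:funcional} the last term becomes $\int_\Omega\mP_n\mP_m^\top(x+y-\oD_0)\omega = \langle(x+y)\mP_n,\mP_m^\top\rangle_t$ since $\langle\mP_n,\mP_m^\top\rangle_t=0$, and by \eqref{eq:soma} this is nonzero only for $m=n-1$, yielding $\langle\dt{\mP}_n,\mP_{n-1}^\top\rangle_t = \oC_n(t)\oH_{n-1}(t)$, hence \eqref{dot_P}. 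Conversely \eqref{dot_P} plugged into the derivative of \eqref{t-3tr} and matched against \eqref{t-3tr} at levels $\mP_{n+1},\mP_n,\mP_{n-1}$ gives precisely the system \eqref{CD-punto-i} (this matching, term by term, is the main bookkeeping step). The block-matrix restatement $(\mathrm{i})\Leftrightarrow(\mathrm{ii})$ is the observation that \eqref{CD-punto-i} is the entrywise form of \eqref{Lax} given the structure of $\pA(t)$ in \eqref{A-matrix}; and $(\mathrm{ii})\Leftrightarrow(\mathrm{iii})$ follows from $\dt{\aoverbrace[L1R]{\pJ_1^h\pJ_2^k}} = \sum_{a+b=h-1}\pJ_1^a\dt{\pJ}_1\pJ_1^b\pJ_2^k + \pJ_1^h\sum_{a+b=k-1}\pJ_2^a\dt{\pJ}_2\pJ_2^b$ and the telescoping of $[\pA,\pJ_i]$ through powers (using that $\pA$ commutes past the product in the Leibniz sense), the case $h=k=1$ plus the commutativity $\pJ_1\pJ_2=\pJ_2\pJ_1$ giving the general case. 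Finally the loop closes via $(\mathrm{iii})\Rightarrow(\mathrm{iv})$: take the $(0,0)$ block entry of $\dt{\aoverbrace[L1R]{\pJ_1^h\pJ_2^k}} = [\pA,\pJ_1^h\pJ_2^k]$, note $\ell_0^\top\pA=0$ and $\pA\ell_0=\begin{bmatrix}0&\oC_1^\top&0&\cdots\end{bmatrix}^\top$-type, compute $\ell_0^\top[\pA,\pJ_1^h\pJ_2^k]\ell_0 = -\ell_0^\top\pJ_1^h\pJ_2^k\pA\ell_0$ and expand, recovering \eqref{omega-dot} through \eqref{moments}.

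The main obstacle I expect is the term-by-term matching in $(\mathrm{viii})\Rightarrow(\mathrm{i})$ and especially the Leibniz/telescoping argument in $(\mathrm{ii})\Leftrightarrow(\mathrm{iii})$: one must be careful that the block sizes change with $n$, so $\pA$ does not literally commute with $\pJ_i$ but the commutator $[\pA,\pJ_1^h\pJ_2^k]$ still preserves the tridiagonal-by-blocks profile, and one needs the boundedness/commutativity hypotheses recorded in the two Lemmas above for $\pJ_1^h\pJ_2^k=\pJ_2^k\pJ_1^h$ to make sense of the symmetric role of $z_1$ and $z_2$. Care is also needed to justify differentiating the moment integrals and the formal Stieltjes series under the sum, which is where the compactness of $\Omega$ and finiteness of all moments are used.
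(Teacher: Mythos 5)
Your individual implications are essentially the paper's own arguments, only reordered, and most of them are sound: (vi)$\Leftrightarrow$(vii) by solving the linear ODE in $t$, (vi)$\Rightarrow$(iv) by differentiating under the integral, (iv)$\Leftrightarrow$(v) by the series manipulation, the orthogonality computation giving \eqref{dot_P}, the coefficient matching giving \eqref{CD-punto-i}, the Leibniz/induction step for powers of $\pJ_1,\pJ_2$, and the $(0,0)$-entry extraction for (iii)$\Rightarrow$(iv) all match what the paper does. The problem is the global logic: in your graph of implications nothing outside the pair $\{(\mathrm{vi}),(\mathrm{vii})\}$ ever points back into it. You establish (vi)$\Leftrightarrow$(vii), (vii)$\Rightarrow$(iv), (iv)$\Leftrightarrow$(v), then (vi) (or (iv)) $\Rightarrow$(viii)$\Rightarrow$(i)$\Leftrightarrow$(ii)$\Leftrightarrow$(iii)$\Rightarrow$(iv). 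This shows that (vi) implies all the other statements and that (i)--(v) and (viii) are mutually equivalent, but it never shows that any of them implies (vi). The paper closes exactly this hole with the step (v)$\Rightarrow$(vi): substituting the integral representation \eqref{St} into \eqref{eq:eqdiffStiel} gives $\int_\Omega \bigl(\dt{\omega} + (x+y-\oD_0)\,\omega\bigr)\,\bigl((z_1-x)(z_2-y)\bigr)^{-1}\dd x\dd y = 0$ for all admissible $(z_1,z_2)$, and then one argues (the paper's ``analyticity'' remark, i.e.\ that a signed density on the compact set $\Omega$ with all moments zero vanishes) that the numerator is identically zero, which is \eqref{eq:funcional}. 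Some such density/determinacy argument is unavoidable, since one is passing from countably many integral identities to a pointwise statement about $\omega$.

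Your proposed alternative, ``(ii)$\Rightarrow$(vi) by reading off the $(0,0)$-entry and using $\omega_{1,0}(t)=\oD_{0,1}(t)$,'' cannot do this job: the $(0,0)$-block of \eqref{Lax} is $\dt{\oD}_{0,i} = -\oL_{0,i}\,\oC_1$, an identity among the lowest-order moments of $\omega(\cdot,\cdot,t)$, and no collection of moment identities yields the pointwise equation \eqref{eq:funcional} without the determinacy argument above. So you should either insert (v)$\Rightarrow$(vi) as in the paper, or prove (iv)$\Rightarrow$(vi) directly by the same reasoning: (iv) gives $\int_\Omega x^h y^k\bigl(\dt{\omega}+(x+y-\oD_0)\omega\bigr)\dd x\dd y = 0$ for all $h,k\geqslant 0$, hence the integrand vanishes because $\Omega$ is compact. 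With that one extra edge your diagram becomes strongly connected and the equivalence follows.
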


\begin{proof}
Rewriting~\eqref{CD-punto-i} in matrix notation we arrive to~\eqref{Lax}; and then, we get that, (i)~$\Rightarrow$ (ii).
Now, we will prove by induction that
(ii) $\Rightarrow$ (iii).
For $n=1$,~(ii) is the induction hypothesis. Applying an inductive reasoning we~get,
\begin{align*}
 \dt{\aoverbrace[L1R]{\pJ_i^n}} 
& = \dt{\aoverbrace[L1R]{\pJ_i^{n-1}}} \pJ_i + \pJ_i^{n-1}
 \dt{\pJ}_i\\
& = (\pA\,\pJ_i^{n-1} -\pJ_i^{n-1}\,\pA)\pJ_i + \pJ_i^{n-1}(\pA\,\pJ_i - \pJ_i\,\pA) && \text{by the hypotesis} \\
& = \pA\,\pJ_i^{n} - \pJ_i^n\,\pA , && i = 1,2 .
\end{align*}
Now, we can see that
\begin{align*}
\dt{\aoverbrace[L1R]{\pJ_1^h \pJ_2^k}} 
 & = 
 \dt{\pJ}_1^h \pJ_2^k
 + \pJ_1^h 
 \dt{\pJ}_2^k \\
 & = \big( \pA \pJ_1^h - \pJ_1^h \pA \big) \pJ_2^k + \pJ_1^h \big( \pA \pJ_2^k - \pJ_2^k \pA \big) && \text{by the last identity} \\
 & = \pA \big( \pJ_1^h \pJ_2^k \big) - \big( \pJ_1^h \pJ_2^k \big) \pA, && h,k \geqslant 0 .
\end{align*}
To prove that
(iii) $\Rightarrow$ (iv), we recall that the moments
\begin{align*}
\omega_{h,k}(t) = \int_{\Omega} x^h\,y^k\, \omega(x,y,t) \dd x \dd y
\end{align*}
depend on $t$, and by \eqref{moments}, are written in the form
\begin{align*}
\omega_{h,k}(t) = \ell_0^{\top} \big( \pJ_1(t) \big)^h \, \big(\pJ_2(t)\big)^k \ell_0.
\end{align*}
Taking derivatives and using (iii), we get
\begin{align*}
\dt{\omega}_{h,k} & = \ell_0^{\top} \,\dt{\aoverbrace[L1R]{ 
\pJ_1^h\,\ \pJ_2^k}} \,\ell_0 
= \ell_0^{\top} \, \big( \pA\, \pJ_1^h\, \pJ_2^k - \pJ_1^h\, \pJ_2^k\,\pA \big) \,\ell_0 = -\ell_0^{\top} \pJ_1^h\,
\pJ_2^k\,\ell_1\,\oC_1 \\
&= -\ell_0^{\top} 
\pJ_1^h\, \pJ_2^k\,\ell_0\, \oD_0-\ell_0^{\top} \pJ_1^h\,\pJ_2^k\,\ell_1\, \oC_1 + \ell_0^{\top} \pJ_1^h\, 
\pJ_2^k\,\ell_0\, \oD_0 \\
&= -\omega_{h+1,k} - \omega_{h,k+1} + \omega_{h,k} \oD_0 \\
& = -\omega_{h+1,k} - \omega_{h,k+1} + \omega_{h,k} \, \big( \omega_{1,0} + \omega_{0,1} \big),
\end{align*}
using the explicit expression of the matrix $\pA$ given in~\eqref{A-matrix} and the special shape of the 
matrix $ \pJ_1^h\, \pJ_2^k$. 
Here, 
$\ell_1 = 
\begin{bmatrix}
0 & 1 & 1 & 0 & \cdots 
\end{bmatrix}$, 
and together with~$\ell_0^{\top}$ allows access to the $(0,1)$ block matrix element in the matrix~$ 
\pJ_1^h\, \pJ_2^k$.

\noindent
From the definition of the Stieltjes function, and taking into account their time dependence, we can write using~(iv) that
\begin{align*}
\oS(z_1,z_2,t )
 =
\sum_{h,k=0}^{+\infty} \, \frac{\omega_{h,k}(t)}{z_1^{h+1}z_2^{k+1}} .
\end{align*}
Now, computing  the \emph{dot}-derivative, and applying~\eqref{omega-dot}, we successively get~that
\begin{align*}
& \dt{\oS}(z_1,z_2, t ) = \sum_{h,k=0}^{+\infty} \, \frac{\dt{\omega}_{h,k}}{z_1^{h+1}z_2^{k+1}} \\
& \hspace{.5cm}
= -\sum_{h,k=0}^{+\infty} \, \frac{\omega_{h+1,k}}{z_1^{h+1}z_2^{k+1}}
-\sum_{h,k=0}^{+\infty} \, \frac{\omega_{h,k+1}}{z_1^{h+1}z_2^{k+1}} 
+ \oD_0\,\sum_{h,k=0}^{+\infty} \, \frac{\omega_{h,k}}{z_1^{h+1}z_2^{k+1}}\\
 & \hspace{.5cm}
 = -\sum_{h=1,k=0}^{+\infty} \, \frac{\omega_{h,k}}{z_1^{h}z_2^{k+1}}
-\sum_{h=0,k=1}^{+\infty} \, \frac{\omega_{h,k}}{z_1^{h+1}z_2^{k}} 
+ \oD_0\,\oS(z_1, z_2, t )\\
 & \hspace{.5cm}
 = -z_1 \Big( \oS(z_1,z_2, t ) - \sum_{k=0}^{+\infty} \, \frac{\omega_{0,k}}{z_1 z_2^{k+1}}\Big) 
 -z_2\Big( \oS(z_1,z_2 , t ) - \sum_{h=0}^{+\infty} \, \frac{\omega_{h,0}}{z_1^{h+1}z_2}\Big)
 \\
 & \hspace{9.5cm}
 + \oD_0\,\oS(z_1, z_2, t )\\
 & \hspace{.5cm}
 = - \big( z_1+z_2- \oD_0 \big) \,\oS(z_1,z_2, t ) + \sum_{h=0}^{+\infty} \, \frac{\omega_{h,0}}{z_1^{h+1}} + \sum_{k=0}^{+\infty} \, \frac{\omega_{0,k}}{z_2^{k+1}}\\
 & \hspace{.5cm}
 = - \big( z_1+z_2- \oD_0 \big) \,\oS(z_1,z_2, t ) + \oS_{1,0}(z_1,t) + \oS_{2,0}(z_2, t ),
\end{align*}
by using the marginal Stieltjes functions defined in \eqref{St-first}-\eqref{St-second}.
There\-fore, the bi\-vari\-ate Stieltjes function $\oS(x,y, t )$ satisfies the \emph{dot}-dif\-fer\-en\-tial
equation~\eqref{eq:eqdiffStiel}, and so we arrive to~(v).
 \\[.105cm]
\noindent
Departing from (v), taking into account the representation~\eqref{St} for the Stieltjes function associated with $\omega$, and equation~\eqref{eq:eqdiffStiel}, we have
\begin{multline*}
\int_{\Omega} \frac{\dt{\omega} (x,y ,t)}{(z_1 - x) (z_2 - y)} \, \dd x \dd y
= - 
\int_{\Omega} \frac{(z_1 -x) \omega (x,y ,t)}{(z_1 - x) (z_2 - y)} \, \dd x \dd y
 \\ - 
\int_{\Omega} \frac{(z_2 -y) \omega (x,y ,t)}{(z_1 - x) (z_2 - y)} \, \dd x \dd y
 -
\int_{\Omega} \frac{(x + y - \oD_0) \omega (x,y ,t)}{(z_1 - x) (z_2 - y)} \, \dd x \dd y \\
 + \oS_1 (z_1,t) + \oS_2 ( z_2,t) .
\end{multline*}
Now, from~\eqref{St-first} and~\eqref{St-second} we see that the functions $\oS_1$ and $\oS_2$
have the following integral representation
\begin{align*}
\oS_1 (z_1 , t) & = \int_{\Omega} \frac{\omega (x,y ,t)}{z_1 - x} \, \dd x \dd y , &&
\oS_2 (z_1 , t) = \int_{\Omega} \frac{\omega (x,y ,t)}{z_2 - y} \, \dd x \dd y .
\end{align*}
Hence the above equation takes the form
\begin{align*}
\int_{\Omega} \frac{\dt\omega (x,y ,t) + (x + y - \oD_0)}{(z_1 - x) (z_2 - y)} \, \dd x \dd y
 = 0 ,
\end{align*}
and so we get~\eqref{eq:funcional} (by the analiticity of this function).
 \\[.105cm]
\noindent
Now, departing from~(vi) 
and solving~\eqref{eq:funcional}, we get
\begin{align*}
\omega(x,y,t) = \kappa^{-1} \mathrm{e}^{-(x_1+x_2 - \omega_{10}- \omega_{01})t }\omega(x,y) && \text{with} &&
\omega_{00} = 1 ,
\end{align*}
with $\kappa$ given by
$\displaystyle
\kappa = \int_\Omega \mathrm{e}^{-(x_1+x_2 - \omega_{10}- \omega_{01})t }\omega(x,y) \dd x \dd y $.
Using this expression for $\kappa$ we get the representation~\eqref{eq:representacao} for $\omega$.
 \\[.105cm]
\noindent
(vii) $\Rightarrow$ (viii):
As $\big\{ \mP_n \big\}_{n\geqslant0}$ is a basis of the linear space of two variable polynomials we know that, there exist matrices $\alpha_0, \alpha_1, \ldots , \alpha_{n-1}$ (that could depend on $t$, but not on $x,y$) such that
\begin{align*}
\dt{\mP}_n = \alpha_0 \,\mP_{n-1} + \alpha_1 \,\mP_{n-2} + \cdots + \alpha_{n-1} \,\mP_{0} .
\end{align*}
Using the orthogonality conditions we get that
\begin{align*}
\alpha_k H_{n-1-k} = \int_\Omega \dt{\mathbb P}_n \, \mathbb P_{n-1 -k}^\top \, \omega \, \dd x \dd y , && k \in \big\{ 0,1 , \ldots , n-1 \big\}.
\end{align*}
We determine the Fourier coefficients in the above expansion as
\begin{align*}
 \int_\Omega \dt{\mathbb P}_n \, \mathbb P_{n-1 -k}^\top \omega \dd x \dd y
 & =
 \int_\Omega {\mathbb P}_n \, \dt{\mathbb P}_{n-1 -k}^\top \omega \dd x \dd y 
 +
 \int_\Omega {\mathbb P}_n \, \mathbb P_{n-1 -k}^\top \, \dt \omega \dd x \dd y \\
 & = - \int_\Omega {\mathbb P}_n \, \mathbb P_{n-1 -k}^\top \, (x+y - D_0) \, \omega \, \dd x \dd y \\
 & = - \int_\Omega {\mathbb P}_n \, \mathbb P_{n-1 -k}^\top \, (x+y) \omega \, \dd x \dd y .
\end{align*}
Now, using the orthogonality we get that
$\alpha_k \equiv \mathtt{0}$, the null matrix, for $k = 1, \ldots , n-1$, and
from~\eqref{eq:soma}, i.e.
\begin{align*}
(x+y)\,\mP _{n} = \oL_{n}\,\mP _{n +1} + \oD_{n}\, \mP _{n} + \oC_{n}\,\mP _{n-1} ,
\end{align*}
we get that
\begin{align*}
\alpha_{0} H_{n-1} = \oC_{n}H_{n-1}, && \text{i.e.} && \alpha_{0} = \oC_{n} .
\end{align*}
From here the result follows.
 \\[.105cm]
\noindent
We close the diagram by differentiating the first relation in~\eqref{t-3tr}
\begin{align*}
x\,\dt{\mP} _n = \oL_{n,1}\,\dt{\mP} _{n+1} + \dt{\oD}_{n,1}\, \mP _n + \oD_{n,1}\, \dt{\mP} _n
+ \dt{\oC}_{n,1}\,\mP _{n-1} + \oC_{n,1}\,\dt{\mP} _{n-1},
\end{align*}
and using~\eqref{dot_P}, i.e., we apply the hypotesis (viii),
\begin{multline*}
x\,\oC_{n}\mP _{n-1} = \oL_{n,1}\,\oC_{n+1} \mP _{n} + \dt{\oD}_{n,1}\, \mP _n + \oD_{n,1}\, \oC_{n}\mP _{n-1}
 \\ + \dt{\oC}_{n,1}\,\mP _{n-1} + \oC_{n,1}\,\oC_{n-1}\mP _{n-2},
\end{multline*}
and again, we use~\eqref{t-3tr} for $n-1$, to deduce
\begin{multline*}
\oC_{n} (\oL_{n-1,1}\,\mP _{n} + \oD_{n-1,1}\, \mP _{n-1} + \oC_{n-1,1}\,\mP _{n-2}) \\
= (\oL_{n,1}\,\oC_{n+1} + \dt{\oD}_{n,1}) \mP _n 
+ (\oD_{n,1}\,\oC_{n} + \dt{\oC}_{n,1})\mP _{n-1} + \oC_{n,1}\,\oC_{n-1}\mP _{n-2}.
\end{multline*}
Since $\big\{\mP_n\big\}_{n\geqslant 0}$ is an orthogonal basis, matching the coefficients,~\eqref{CD-punto-i} follows for $i=1$. 

The case $i=2$ is analogous.
\end{proof}

As a corollary of this theorem we get a representation for the Stieltjes function associated with $\omega$ that modelizes the Toda lattice, as
\begin{multline*}
\oS(z_1,z_2, t ) 
= e^{-(z_1+z_2) t + \widetilde{d}(t)}
\Big( K - \int_0^{t} \big( \oS_{1,0}(z_1,s) \\
+ \oS_{2,0}(z_2,s) \big) e^{(z_1+z_2)s -\widetilde{d}(s)} \dd s \Big),
\end{multline*}
where $K$ is a positive constant and $\widetilde{d}(t) = \int_0^{t } \oD_0(s) \dd s$. 

Following~\cite{BP17}, we suppose that the matrix coefficients of the three term relation satisfy the~$2$D~Toda lattice
\begin{align}\label{C-punto-ib}
\begin{cases}
\dt{\oD}_{n,i}(t) = \oC_n(t)\, \oL_{n-1,i} - \oL_{n,i}\,\oC_{n+1}(t),\\
\dt{\oC}_{n,i}(t) = \oC_{n,i}(t)\,\oD_{n-1}(t) - \oD_{n}(t)\, \oC_{n,i}(t),
\end{cases}
\end{align}
for $i=1,2$, and $n\geqslant 0$, and then, summing both equations, we get the~system
\begin{align*}
\begin{cases}
\dt{\oD}_{n}(t) = \oC_{n}(t) \, \oL_{n-1} - \oL_{n} \, \oC_{n+1}(t), 
\\
\dt{\oC}_{n}(t) = \oC_{n}(t) \, \oD_{n-1}(t) - \oD_{n}(t) \, \oC_{n}(t).
\end{cases}
\end{align*}
Notice that, in equations~\eqref{C-punto-ib} appear $\oC_{n}(t)$ and $\oC_{n,i}(t)$. We prove that the Toda system deduced in \cite{BP17} is equivalent to our Toda system~\eqref{CD-punto-i}.

\begin{proposition}
The $2$D Toda lattice~\eqref{C-punto-ib} is equivalent to \eqref{CD-punto-i}.
\end{proposition}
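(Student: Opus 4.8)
The plan is to note that the two systems have identical equations for $\dt{\oD}_{n,i}$, so that the whole proposition comes down to reconciling the two prescriptions for $\dt{\oC}_{n,i}$, and that the discrepancy between them is annihilated by the commutativity relations~\eqref{C_D_Symmetry}. The point is that those relations hold at every time: for each fixed $t\geqslant0$ the monic OPS $\big\{\mathbb{P}_n(t)\big\}_{n\geqslant0}$ satisfies $xy\,\mathbb{P}_n(t)=yx\,\mathbb{P}_n(t)$ and is orthogonal with respect to $\omega(x,y,t)$, so the Lemma yielding~\eqref{C_D_Symmetry} applies verbatim with the $t$-dependent coefficients $\oD_{n,i}(t)$, $\oC_{n,i}(t)$; in particular
\begin{align*}
\oD_{n,1}\,\oC_{n,2} + \oC_{n,1}\,\oD_{n-1,2} = \oD_{n,2}\,\oC_{n,1} + \oC_{n,2}\,\oD_{n-1,1}, \qquad n\geqslant1.
\end{align*}
With $\oC_n=\oC_{n,1}+\oC_{n,2}$ and $\oD_n=\oD_{n,1}+\oD_{n,2}$ as in the statement, it then suffices to show
\begin{align*}
\oC_n\,\oD_{n-1,i} - \oD_{n,i}\,\oC_n = \oC_{n,i}\,\oD_{n-1} - \oD_n\,\oC_{n,i}, \qquad i=1,2,\ n\geqslant1 .
\end{align*}

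First I would expand both sides using the definitions of $\oC_n$ and $\oD_{n-1}$ (resp.\ $\oD_n$). For $i=1$ the terms $\oC_{n,1}\oD_{n-1,1}$ and $\oD_{n,1}\oC_{n,1}$ occur on both sides and cancel, and the difference ``left-hand side minus right-hand side'' collapses to
\begin{align*}
\big(\oC_{n,2}\,\oD_{n-1,1} + \oD_{n,2}\,\oC_{n,1}\big) - \big(\oD_{n,1}\,\oC_{n,2} + \oC_{n,1}\,\oD_{n-1,2}\big),
\end{align*}
which vanishes precisely by the displayed instance of~\eqref{C_D_Symmetry}. The case $i=2$ is identical after interchanging the subscripts $1$ and $2$.

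Assembling this, the right-hand sides of~\eqref{C-punto-ib} and~\eqref{CD-punto-i} match term by term for all $n\geqslant1$; and the extra index $n=0$ in~\eqref{C-punto-ib} contributes nothing new, since there the $\oC$-equation reads $0=0$ (because $\oC_{0,i}=0$) and the $\oD$-equation is just the $n=0$ instance of the same formula appearing in~\eqref{CD-punto-i}. Hence a sequence of three term relation coefficients solves~\eqref{C-punto-ib} if and only if it solves~\eqref{CD-punto-i}, which is the claimed equivalence. I do not expect any genuine difficulty here; the only thing to watch is that~\eqref{C_D_Symmetry} must be invoked with its time-dependent matrices, and that the block-index bookkeeping in the expansion is carried out carefully.
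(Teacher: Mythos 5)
Your proposal is correct and follows essentially the same route as the paper: both reduce the equivalence to the $\dt{\oC}_{n,i}$ equations and cancel the discrepancy using the second identity in~\eqref{C_D_Symmetry} applied to the $t$-dependent coefficients. Your explicit remark that~\eqref{C_D_Symmetry} persists for all $t$ (and the $n=0$ bookkeeping) is a small but welcome addition the paper leaves implicit.
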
 

\begin{proof}
We transform~\eqref{C-punto-ib} by using~\eqref{C_D_Symmetry},
\begin{align*}
\dt{\oC}_{n,1}(t) & 
= \oC_{n,1}(t)\,\oD_{n-1}(t) - \oD_{n}(t)\, \oC_{n,1}(t) \\
 & 
 = \oC_{n,1}(t) (\oD_{n-1,1}(t) + \oD_{n-1,2}(t)) - (\oD_{n,1}(t) + \oD_{n,2}(t)) \oC_{n,1}(t)\\
 & = (\oC_{n,1}(t)+\oC_{n,2}(t)) \oD_{n-1,1}(t) - \oC_{n,2}(t) \oD_{n-1,1}(t) \\
 & \hspace{1.75cm}
+ \oC_{n,1}(t) \oD_{n-1,2}(t) - \oD_{n,1}(t)(\oC_{n,1}(t) + \oC_{n,2}(t)) \\
& \hspace{4.5cm} + \oD_{n,1}(t)\oC_{n,2}(t) - \oD_{n,2}(t) \oC_{n,1}(t)\\
 & = \oC_{n}(t)\, \oD_{n-1,1}(t) - \oD_{n,1}(t)\,\oC_{n}(t).
\end{align*}
In this way,~\eqref{CD-punto-i} is deduced. 
\end{proof}

\section{Isospectrality of $\pJ_i(t)$} \label{sec:4}

In this Section, we study the eigenvalues of the matrix $\pJ_i(t)$.

\begin{proposition}
The eigenvalues of the matrix $\pJ_i(t)$ are real, for $i=1,2.$
\end{proposition}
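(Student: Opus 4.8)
The plan is to exploit the factorization
\begin{align*}
\pJ_i(t) = \pH^{1/2}(t)\,\pL_i(t)\,\pH^{-1/2}(t), && i=1,2,
\end{align*}
which holds verbatim in the present $t$-dependent setting, with $\pL_i(t)$ the orthonormal block Jacobi matrix built from the three term relations~\eqref{normal_3tr} and with $\oH_n(t)$ in place of $\oH_n$. Since $\pH^{1/2}(t)$ is positive definite, it is invertible, so $\pJ_i(t)$ is genuinely similar to $\pL_i(t)$, and the two matrices share the same eigenvalues. It then suffices to observe that $\pL_i(t)$, as displayed in~\eqref{Jacobi-block-normal}, is real and symmetric: its diagonal blocks $\oB_{n,i}(t)$ are symmetric, and its sub- and superdiagonal blocks are the pair $\oA_{n,i}(t)$, $\oA_{n,i}^{\top}(t)$, so the block matrix equals its own transpose. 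A real symmetric matrix (self-adjoint operator) has real spectrum, which yields the claim.

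To keep the argument self-contained and avoid invoking the full spectral theorem, I would run it directly on an eigenpair. Suppose $\pJ_i(t)\,v = \lambda\,v$ for some $\lambda\in\mathbb{C}$ and some nonzero $v$, and set $w = \pH^{-1/2}(t)\,v$; then $w\neq 0$ because $\pH^{-1/2}(t)$ is invertible, and
\begin{align*}
\pL_i(t)\,w = \pH^{-1/2}(t)\,\pJ_i(t)\,\pH^{1/2}(t)\,w = \pH^{-1/2}(t)\,\pJ_i(t)\,v = \lambda\,w .
\end{align*}
Pairing with $w$ in the $\ell^2$ inner product and using that $\pL_i(t)$ is real and symmetric (hence self-adjoint), the scalar $\langle \pL_i(t)\,w, w\rangle$ is real; but it equals $\lambda\,\langle w,w\rangle$ with $\langle w,w\rangle>0$, and therefore $\lambda\in\mathbb{R}$.

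The only delicate point is the infinite-dimensional bookkeeping: one must know that $\pH^{1/2}(t)$ and $\pH^{-1/2}(t)$ are well defined and that the identity $\pL_i(t)\,w=\lambda w$ is meaningful. Under the standing hypothesis that $\Omega$ is compact — the same one used above to obtain boundedness and self-adjointness of the $\pL_i$ — the matrices $\oH_n(t)$ are bounded above and below, so $\pH^{\pm1/2}(t)$ are bounded operators and nothing changes; otherwise one simply reads the statement as a statement about the point spectrum, where a purported eigenvector lies in $\ell^2$ and the computation above is unchanged. I expect this to be the main (and only mild) obstacle; the algebraic core — reduction to a real symmetric matrix through the similarity $\pJ_i(t)\sim\pL_i(t)$ — is immediate, and it also sets up the isospectrality result to follow, since the Lax equation~\eqref{Lax} makes this similarity class independent of $t$.
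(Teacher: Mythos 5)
Your proof is correct and follows essentially the same route as the paper: the similarity $\pJ_i(t)=\pH^{1/2}(t)\,\pL_i(t)\,\pH^{-1/2}(t)$ reduces the claim to the reality of the spectrum of the real symmetric (self-adjoint) matrix $\pL_i(t)$. Your added eigenpair computation and the remarks on boundedness are just a more explicit rendering of the same argument.
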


\begin{proof}
By \eqref{Jacobi-block-normal}, we get that
\begin{align*}
\pH^{1/2}\,\pL_i\,\pH^{-1/2} = \pJ_i, && i=1,2,
\end{align*}
i.e., $\pL_i$ and $\pJ_i$ are similar and have the same eigenvalues. Since $\pL_i$ is
self-adjoint
and real, then 
all of its eigenvalues are real, and then, the eigenvalues of $\pJ_i$ are.
\end{proof}

Next, we end the paper by proving that the matrices $\pJ_i(t)$ are isospectral, i.e., their eigenvalues are independent of the time 
variable~$t$, as occurs in the univariate case for standard Lax pairs (cf.~\cite{{La07}}).

\begin{theorem}
Let $\pJ_i(t)$ be given by~\eqref{Jacobi-block}.
Then, $\pJ_i(t)$, $i=1,2$, satisfies~\eqref{Lax} if, and only if, 
the algebraic spectrum of $\pJ_i(t)$, $i=1,2$, is independent of $t$.
\end{theorem}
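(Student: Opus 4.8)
The plan is to treat this as the standard Lax-pair isospectrality argument, carried out at the level of the self-adjoint operators $\pL_i$ rather than $\pJ_i$ directly, since the $\pL_i$ are the ones whose spectral theory is under control. First I would record the conjugation relation $\pJ_i = \pH^{1/2}\pL_i\pH^{-1/2}$ from \eqref{Jacobi-block-normal} and differentiate it in $t$, writing $\dt{\pJ}_i = \dt{\aoverbrace[L1R]{\pH^{1/2}}}\,\pL_i\,\pH^{-1/2} + \pH^{1/2}\,\dt{\pL}_i\,\pH^{-1/2} + \pH^{1/2}\,\pL_i\,\dt{\aoverbrace[L1R]{\pH^{-1/2}}}$, and I would set $\pmb{\oN} = \pH^{1/2}\,\dt{\aoverbrace[L1R]{\pH^{-1/2}}} = -\dt{\aoverbrace[L1R]{\pH^{1/2}}}\,\pH^{-1/2}$, an anti-self-adjoint block-diagonal operator. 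Substituting the Lax equation \eqref{Lax} and comparing, one obtains that $\pL_i$ satisfies its own Lax equation $\dt{\pL}_i = [\pmb{\oB},\pL_i]$ with $\pmb{\oB} = \pH^{-1/2}\pA\pH^{1/2} - \pmb{\oN}$, and the key point is to check that $\pmb{\oB}$ is anti-self-adjoint: the block-diagonal piece $\pmb{\oN}$ is anti-self-adjoint by construction (differentiating $\pH^{1/2}\pH^{-1/2}=\pmb{\oI}$), and $\pH^{-1/2}\pA\pH^{1/2}$ is strictly lower-triangular with the strictly-upper part being, after a short computation using $\oC_{n,i}\oH_{n-1} = \oH_n\oL_{n-1,i}^\top$ and hence $\oH_{n-1}^{-1/2}\oC_n^\top\oH_n^{1/2} = -\big(\oH_n^{-1/2}\oC_n\oH_{n-1}^{1/2}\big)^\top$ up to the correct sign bookkeeping, its negative transpose — so $\pmb{\oB}^\top = -\pmb{\oB}$.

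Given that $\dt{\pL}_i = [\pmb{\oB},\pL_i]$ with $\pmb{\oB}$ anti-self-adjoint, the forward direction follows the classical route: define the unitary propagator $\pmb{\oU}(t)$ by $\dt{\pmb{\oU}} = \pmb{\oB}\,\pmb{\oU}$, $\pmb{\oU}(0)=\pmb{\oI}$ (using boundedness of $\pL_i$, hence of $\pA$ and of $\pmb{\oB}$ on the relevant domain, so the ODE in operator norm has a solution and $\pmb{\oU}(t)$ is unitary because $\pmb{\oB}$ is anti-self-adjoint), and then verify $\frac{\dd}{\dd t}\big(\pmb{\oU}^{-1}\pL_i\pmb{\oU}\big) = \pmb{\oU}^{-1}\big(-\pmb{\oB}\pL_i + \dt{\pL}_i + \pL_i\pmb{\oB}\big)\pmb{\oU} = \pmb{\oU}^{-1}\big(\dt{\pL}_i - [\pmb{\oB},\pL_i]\big)\pmb{\oU} = \mathtt{0}$. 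Hence $\pL_i(t) = \pmb{\oU}(t)\,\pL_i(0)\,\pmb{\oU}(t)^{-1}$ is a unitary conjugate of $\pL_i(0)$, so its spectrum is constant in $t$; since $\pJ_i(t)$ is similar to $\pL_i(t)$, the algebraic spectrum of $\pJ_i(t)$ is independent of $t$.

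For the converse — spectrum constant implies \eqref{Lax} — I would argue via the moments. By \eqref{moments}, $\omega_{h,k}(t) = \ell_0^\top \pJ_1(t)^h\pJ_2(t)^k\ell_0$, and constancy of the spectra of the commuting bounded self-adjoint operators $\pL_1(t),\pL_2(t)$ (equivalently $\pJ_1(t),\pJ_2(t)$) pins down, via the joint spectral measure and the cyclic vector $\ell_0$, enough rigidity that the moments evolve by \eqref{omega-dot}; this is the more delicate half. Concretely, isospectrality of each $\pJ_i(t)$ alone is \emph{not} obviously enough — one needs that the pair evolves by a genuine Lax flow — so the cleanest route is: isospectral plus the constraint that $\{\pJ_i(t)\}$ remain the block Jacobi matrices of a monic OPS forces the evolution to be the one generated by $\pmb{\oB}$, and then unwinding the conjugation back through $\pH^{1/2}$ recovers \eqref{Lax}. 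The main obstacle I anticipate is precisely making the converse watertight: showing that ``spectrum independent of $t$'' together with the Jacobi-matrix normalization leaves no freedom other than the flow \eqref{Lax}, rather than merely being a consequence of it. If that turns out to require an extra hypothesis, the honest statement is the forward implication plus the observation that any isospectral deformation preserving the Jacobi structure and the cyclicity of $\ell_0$ must coincide with the Toda flow by uniqueness of the monic OPS attached to the (time-independent) joint spectral data restricted to $\ell_0$.
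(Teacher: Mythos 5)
Your forward implication follows the classical operator-theoretic route --- conjugate to the self-adjoint matrices $\pL_i$, exhibit an anti-self-adjoint generator, and integrate it to a unitary propagator --- which is genuinely different from the paper's argument: the paper differentiates the generalized eigenvalue relation $x_i\,\pP=\pJ_i\,\pP$, obtains $\dt{x}_i\,\pP=(\pJ_i-x_i\,\oI)\,(\dt{\pP}-\pA\,\pP)$, and invokes the equivalence between \eqref{Lax} and \eqref{dot_P} from Theorem~\ref{teo:main} to conclude $\dt{x}_i=0$. However, two of your stated facts are false as written. First, $\pmb{\oN}=\pH^{1/2}\,\tfrac{\dd}{\dd t}\big(\pH^{-1/2}\big)$ is block diagonal but \emph{not} anti-self-adjoint: differentiating $\pH^{1/2}\pH^{-1/2}=\oI$ only gives $\pmb{\oN}=-\tfrac{\dd}{\dd t}\big(\pH^{1/2}\big)\pH^{-1/2}$, not $\pmb{\oN}^{\top}=-\pmb{\oN}$; already for scalar blocks $\pmb{\oN}$ has entries $-\tfrac12\dt{h}_n/h_n$, nonzero real numbers. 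Second, $\pmb{\oB}=\pH^{-1/2}\pA\,\pH^{1/2}-\pmb{\oN}$ is block \emph{lower} triangular, and no nonzero block lower triangular matrix can satisfy $\pmb{\oB}^{\top}=-\pmb{\oB}$; a ``short computation'' cannot produce a strictly upper part out of a strictly lower triangular matrix. The repair is standard and should be made explicit: since $\pL_i$ and hence $\dt{\pL}_i$ are symmetric, transposing $\dt{\pL}_i=[\pmb{\oB},\pL_i]$ yields $[\pmb{\oB}+\pmb{\oB}^{\top},\pL_i]=0$, so $\pmb{\oB}$ may be replaced by its anti-symmetric part $\tfrac12(\pmb{\oB}-\pmb{\oB}^{\top})$ without changing the commutator, after which your propagator argument goes through for bounded $\pL_i$. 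With that fix the forward direction is sound, and arguably more rigorous than the paper's formal eigenvector computation.

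The genuine gap is the converse, which you do not prove and explicitly defer: the theorem is an equivalence, and ``spectrum independent of $t$ implies \eqref{Lax}'' is half of it. Your sketch via the joint spectral measure and cyclicity of $\ell_0$ is a program, not an argument --- as you yourself note, isospectrality of each $\pJ_i(t)$ alone does not single out a flow, and showing that the time-independent joint spectral data together with the monic block Jacobi normalization forces precisely the evolution \eqref{Lax} is exactly the content that would have to be supplied. The paper closes this direction by routing everything through the chain of equivalences of Theorem~\ref{teo:main}: the Lax equation is equivalent to $\dt{\pP}=\pA\,\pP$, and the identity $\dt{x}_i\,\pP=(\pJ_i-x_i\,\oI)\,(\dt{\pP}-\pA\,\pP)$ is read in both directions. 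To complete your proof you must either carry out the spectral rigidity argument in full (possibly under an additional hypothesis, which would change the statement) or pivot the converse through statement (viii) of Theorem~\ref{teo:main} as the paper does.
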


\begin{proof}
From~\eqref{monic_3tr} written in matrix notation as
\begin{align*}
x_i \, \pP = \pJ_i\,\pP, && i =1,2 ,
\end{align*}
taking derivatives, and using~\eqref{Lax}
we get successively
\begin{align*}
\dt x_i \, \pP + x_i \, \dt \pP & = \big( \pA \, \pJ_i - \pJ_i \, \pA \big) \, \pP+ \pJ_i \, \dt \pP \\
\dt x_i \, \pP & = \big( \pJ_i - x_i \, \operatorname I \big) \, \big( - \pA \, \pP + \dt \pP \big) .
\end{align*}
Now, from Theorem~\ref{teo:main} we know that the
Lax{\textendash}Nakamura
 equation for $\pJ_i$, i.e.~\eqref{Lax}, is equivalent to~\eqref{dot_P},~i.e.
\begin{align*}
\dt{\mP}_n(t) = \oC_n(t) \,\mP_{n-1}(t) , && n \in \mathbb N ,
\end{align*}
which in matrix notation reads as
\begin{align*}
 \dt \pP
= \pA \, \pP , &&
\text{hence, we get that,}
&&
\dt x_i \, \pP = \operatorname 0,
\end{align*}
and so, $\displaystyle \dt x_i = 0$.
\end{proof}


\end{document}